\newtheorem{thma}{Theorem}
\theoremstyle{remark}
\theoremstyle{plain}
\newtheorem{lemma}[thma]{Lemma}
\newcommand{\trace}{\operatorname{trace}}
\newcommand{\field}{\ensuremath{\mathbb{F}}}
\newcommand{\vecspace}[1]{\ensuremath{\mathcal{#1}}}
\newcommand{\mat}[2]{\ensuremath{{#1}^{#2\times #2}}}
\newcommand{\Hom}{\operatorname{Hom}}
\newcommand{\Nilp}{\operatorname{Nilp}}
\newcommand{\closure}[2][3]{%
  {}\mkern#1mu\overline{\mkern-#1mu#2}}
\numberwithin{equation}{section}
\numberwithin{thma}{section}
\newcommand*{\wackyenum}[1]{%
  \expandafter\@wackyenum\csname c@#1\endcsname%
}
\newcommand*{\@wackyenum}[1]{%
  $\ifcase#1\or1^{o}\or2^{o}\or3^{o}\or 4^{o} \or 5^{o}
    \else\@ctrerr\fi$%
}
\AddEnumerateCounter{\wackyenum}{\@wackyenum}{53.13}
\thanks{The authors were partially supported by NSF grant DMS-0901628. The second author was also
partially supported by US-Israel BSF grant 2010432.}
\title[Implicit/inverse function theorems for free nc functions]{Implicit/inverse
function theorems for free noncommutative functions}
\author[G. Abduvalieva]{Gulnara Abduvalieva}
\address{Department of Mathematics \\
Drexel University\\
3141 Chestnut St.\\
Philadelphia, PA, 19104}
 \email{gka26@drexel.edu}
\author[D.S.~Kaliuzhnyi-Verbovetskyi]{Dmitry S.
Kaliuzhnyi-Verbovetskyi}
\address{Department of Mathematics \\
Drexel University\\
3141 Chestnut St.\\
Philadelphia, PA, 19104}
  \email{dmitryk@math.drexel.edu}
\date{}
\subjclass{47J07; 17A50; 46L07; 16N40}
 \keywords{Free noncommutative functions; implicit
function theorem; inverse function theorem; operator spaces;
nilpotent matrices}
\begin{document}
\maketitle

\begin{abstract} We prove an implicit function theorem and an inverse
function theorem for free  noncommutative functions over operator
spaces and on the set of nilpotent matrices. We apply these
results to study dependence of the solution of the initial value
problem for ODEs in noncommutative spaces on the initial data and
to extremal problems with noncommutative constraints.

\end{abstract}

\section {Introduction and Statements of the Results} \label{sec:intro}

\subsection{Free NC functions}

A free noncommutative (nc) function is a mapping  defined on the set of matrices of all sizes which respects direct sums
 and similarities, or equivalently, respects intertwinings. Examples include but are not limited to nc polynomials, power series, and matrix-valued rational expressions. The theory of free nc  functions was first  introduced  in the
articles of Joseph L. Taylor \cite{JLTaylor,JLTaylor-nc}. It was
further developed by D.-V. Voiculescu \cite{Voic1,Voic2} for the
needs of free probability. Various aspects of nc
functions\footnote{Here and in the rest of the paper we will omit
the word ``free" for short.} have been studied by Helton
\cite{Helton}, Helton, Klep, and McCullough \cite{HKMcC1, HKMcC2},
Helton and McCullough \cite{HMcC},  Helton and Putinar \cite{HP},
Popescu \cite{Pop1, Pop2},  Muhly and Solel \cite{MS}, Agler and
McCarthy \cite{Ag-Mc1, Ag-Mc2}, Agler and Young \cite{Ag-Y},
 the second author  and Vinnikov\cite{ KVV2,KVV1}, and others.

 In the book of the second author and Victor Vinnikov \cite{KV-VV},  the theory has been put on a systematic  foundation. The nc difference-differential calculus has been developed for
studying various questions of nc analysis; in particular,
the classical (commutative) theory of analytic functions was extended
to a nc setting. It has been established that very mild assumptions of local boundedness of nc functions imply analyticity.

We provide the reader with some basic definitions from
\cite{KV-VV}.
 Let $ \mathcal{R} $ be a unital commutative ring. For a module $\mathcal{M}$ over
$\mathcal{R},$ we
 define the \emph{nc space over  $ \mathcal{M} $},
\begin{equation}\label{eq:ncspace}
 \mathcal{M}_{\rm nc}:=\coprod_{n=1}^{\infty}  \mathcal{M}^{n \times n}.
\end{equation}

 A subset $ \Omega  \subseteq  \mathcal{M}_{\rm nc} $ is called a \emph{nc set} if it is closed under direct sums;
that is, denoting $\Omega_{n}=\Omega \cap \mathcal{M}^{n\times n}$, we have
\begin{equation}\label{eq:ncset}
   X \in \Omega_{n},   Y \in \Omega_{m}   \Longrightarrow X \oplus Y := \left [ \begin{array}{cc}
X& 0
\\ 0&Y
  \end{array} \right ] \in\Omega_{n+m}.
\end{equation}

Matrices over $ \mathcal{R}  $  act from the right and from the left on matrices over $\mathcal{M}$
by the standard rules of matrix multiplication: if $ T \in \mathcal{R}^{r \times p} $ and  $ S \in
\mathcal{R}^{p\times s} $, then for $ X \in \mathcal{M}^{p \times p} $ we have
 \[TX \in  \mathcal{M}^{r \times p}, \quad       XS \in  \mathcal{M}^{p \times s}. \]

 In the special case where
$\mathcal{M}=\mathcal{R}^{d}$, we identify matrices over
$\mathcal{M}$  with  $ d $-tuples of matrices over $\mathcal{R}$
:\[ (\mathcal{R}^{d})^{p\times q}\cong( \mathcal{R}^{p\times q})^{d}
.\] Under this identification, for $d$-tuples  $X=(X_{1}, \ldots, X_{d}) \in
(\mathcal{R}^{n\times n})^{d} $ and $Y=(Y_{1}, \ldots, Y_{d}) \in
(\mathcal{R}^{m\times m})^{d},$  their direct sum has the form
\[  X \oplus Y = \left ( \left [ \begin{array}{cc} X_{1}& 0\\ 0 &Y_{1}
 \end{array} \right ], \ldots,
\left [ \begin{array}{cc} X_{d}& 0\\ 0 &Y_{d} \end{array} \right ]
\right)
 \in (\mathcal{R}^{(n+m)\times(n+m)})^{d}; \] and for a $d$-tuple $X=(X_{1}, \ldots, X_{d}) \in
(\mathcal{R}^{p\times p})^{d} $ and matrices $T \in \mathcal{R}^{r\times p}$,
   $S \in \mathcal{R}^{p\times s}$,
\[TX=(TX_{1}, \ldots, TX_{d}) \in (\mathcal{R}^{r\times p})^{d},\qquad XS=(X_{1}S, \ldots, X_{d}S)
 \in (\mathcal{R}^{p\times s})^{d};  \] that is, $T$ and $S$ act on $d$-tuples of matrices componentwise.

 Let $\mathcal {M}$ and $\mathcal{N}$ be modules over
$ \mathcal{R}$, and let $ \Omega \subseteq \mathcal{M}_{\rm nc}$ be a nc set. A mapping \[  f: \Omega \to
\mathcal{N}_{\rm nc}\] with the property that $f(\Omega_{n}) \subseteq \mathcal{N}^{n\times n}$, $n=1,2,\ldots$,
is called  a \emph{nc function} if $f$ satisfies  the following two conditions:
\begin{equation}\label{eq:dirsums}   f \ \emph{respects\ direct\ sums:\ }
f(X\oplus Y) = f(X) \oplus f(Y),\quad X, Y \in \Omega; \end{equation}
\begin{multline}
\label{eq:sim}  f \ \emph{respects\ similarities:}\ {\rm if\ } X
\in \Omega_{n}\ {\rm and}\  S \in \mathcal{R}^{n\times n}\ {\rm
is}\ {\rm invertible\ }  \\{\rm with\ }  SXS^{-1} \in \Omega_{n},\
{\rm then}\
  f(SXS^{-1}) = Sf(X)S^{-1}, \end{multline}
or, equivalently, satisfies the single condition:
\begin{multline}\label{eq:intertw}  f\ respects\ intertwinings:\
{\rm if}\ X \in \Omega_{n}, Y \in \Omega_{m},\ {\rm and}\ T \in \mathcal{R}^{n\times m}\\
{\rm are\ such\ that\ } XT = TY,\ {\rm then}\ f(X)T = Tf(Y).
\end{multline}

We will say that a nc set $\Omega \subseteq \mathcal{M}_{\rm nc}$ is  \emph{right admissible} if for all $X \in \Omega_{n},$  $Y \in \Omega_{m},$ and all $Z \in \mathcal{M}^{n \times m}$ there exists an invertible $r \in \mathcal{R}$ such that
\[
\left [ \begin{array}{cc} X& rZ\\ 0&Y \end{array} \right ] \in \Omega_{n + m}.
\]

Next we define the right nc difference-differential operator $\Delta_{R}.$
Let a nc set $\Omega$ be right admissible, and let $f$ be a nc function on $\Omega,$
  then for every $X \in \Omega_{n},$   $ Y \in \Omega_{m},$ and  $Z \in \mathcal{M}^{n \times m},$
  and for an invertible $r \in \mathcal{R}$ such that
  $\left [ \begin{smallmatrix}X& rZ\\ 0&Y \end{smallmatrix} \right ] \in \Omega_{n + m}$ we define first
  $\Delta_{R}f(X, Y)(rZ)$ as the (1, 2) block of the matrix
  $f\left(\left[\begin{smallmatrix} X& rZ\\ 0&Y \end{smallmatrix} \right] \right),$
 and then
\begin{equation}\label{eq:intro2}
\Delta_{R}f(X, Y)(Z) = r^{-1}\Delta_{R}f(X, Y)(rZ).
\end{equation}
By \cite[Proposition 2.2]{KV-VV}, $\Delta_{R}f(X, Y)(Z)$ is well
defined. By \cite[Propositions 2.4, 2.6]{KV-VV}, $\Delta_{R}f(X,
Y)(\cdot) \colon  \mathcal{M}^{n \times m}  \to \mathcal{N}^{n
\times m}$ is a linear mapping.

Analogously, one can define the left nc difference-differential operator $\Delta_{L}$ via evaluations of nc functions on block lower triangular matrices. For our purposes, it suffices to consider only the ``right" version of the theory.

By \cite[Theorem 2.11]{KV-VV},
if $ f\colon \Omega \to \mathcal{N}_{\rm nc}$ is a nc function on a right admissible nc set $\Omega,$ then for all
$n, m \in \mathbb{N},$ arbitrary  $X \in \Omega_{n}$,  $Y \in \Omega_{m},$ and  $S \in \mathcal{R}^{n \times m}$
we have
\begin{equation}\label{eq:intro5}
Sf(X) - f(Y)S = \Delta_{R}f(X, Y)(SX - YS).
\end{equation}
In particular  \cite[Theorem 2.10]{ KV-VV}), if $m = n, \ S = I_{n},$ then
\begin{equation}\label{eq:intro6}
f(X) - f(Y) = \Delta_{R}f(X, Y)(X - Y).
\end{equation}
Thus  $\Delta_{R}$  plays the role of a nc finite difference operator of the first order.

When $\mathcal{R}$ is a field, $\mathcal{M}^{n \times n}$ and  $\mathcal{N}^{n \times n}$  for $n = 1, 2, \hdots $ are topological vector spaces, and $f$ is continuous, we have that $\Delta_{R}f(Y, Y)$ is the differential of $f$ at $Y.$

By \cite[Propositions 2.15, 2.17, 3.2]{KV-VV}, $\Delta_{R}f(X, Y)(\cdot)$ as a function of $X$ and $Y$ respects direct sums and similarities, or equivalently, respects  intertwinings. That is, if $ f\colon \Omega \to \mathcal{N}_{\rm nc}$ is a nc function on a right admissible nc  set $\Omega  \subseteq \mathcal{M}_{\rm nc}$, then
\begin{multline}\label{eq:intro7}
\Delta_{R}f(X^{\prime} \oplus X^{\prime \prime}, Y^{\prime} \oplus Y^{\prime \prime} )\left (\left [ \begin{array}{cc} Z^{\prime, \prime}& Z^{\prime, \prime \prime}\\
Z^{\prime \prime, \prime}&Z^{\prime \prime, \prime \prime} \end{array} \right ] \right ) \\ =
\left [ \begin{array}{cc} \Delta_{R}f(X^{\prime}, Y^{\prime}  )(Z^{\prime,\prime})&  \Delta_{R}f(X^{\prime}, Y^{\prime \prime}  )(Z^{\prime, \prime \prime})\\
\Delta_{R}f(X^{\prime \prime}, Y^{\prime}  )( Z^{\prime \prime,\prime})& \Delta_{R}f(X^{\prime \prime}, Y^{\prime \prime}  )(Z^{\prime\prime,\prime\prime})\\
 \end{array} \right ]
\end{multline}
for  $n^{\prime},$ $  m^{\prime}\in \mathbb{N}$,  $n^{\prime \prime}$,  $m^{\prime \prime}\in \mathbb{Z}_{+}$,  $X^{\prime} \in \Omega_{n^{\prime}}$,
 $X^{\prime \prime} \in \Omega_{n^{\prime \prime}}$, $Y^{\prime} \in \Omega_{n^{\prime}}$, $Y^{\prime \prime} \in \Omega_{n^{\prime \prime}}$,
 $\left [ \begin{smallmatrix} Z^{\prime, \prime}& Z^{\prime, \prime \prime}\\ Z^{\prime \prime,\prime}& Z^{\prime \prime, \prime \prime} \end{smallmatrix} \right ] \in \mathcal{M}^{{(n^{\prime} + n^{\prime \prime}) \times (m^{\prime} + m^{\prime \prime})}},$ with block entries of appropriate sizes, and if either $n^{\prime \prime}$ or $m^{\prime \prime}$  is $0$, then the corresponding block entries are void;
\begin{equation}\label{eq:intro8}
\Delta_{R} f(TXT^{-1}, SYS^{-1})(TZS^{-1}) = T\Delta_{R} f(X, Y)(Z)S^{-1}
\end{equation}
 for  $n,$ $ m \in \mathbb{N},$   $X \in \Omega_{n},$ $ Y \in \Omega_{m},$  $ Z  \in \mathcal{M}^{n \times m},$ and invertible $T  \in \mathcal{R}^{n \times n},$  $S  \in \mathcal{R}^{m \times m}$ such that \ $TXT^{-1} \in \Omega_{n},$  $ SXS^{-1} \in \Omega_{m};$
or equivalently,
\begin{equation}\label{eq:intro9}
TX = \tilde{X}T, \ YS = S\tilde{Y} \ \Rightarrow   T\Delta_{R} f(X, Y)(Z)S = \Delta_{R}f(\tilde{X}, \tilde{Y})(TZS)
\end{equation}
for  $n,$ $\tilde{n},$ $  m,$  $\tilde{m} \in \mathbb{N},$  $  X \in \Omega_{n},$  $ \tilde{X} \in \Omega_{\tilde{n}},$ $ Y \in \Omega_{m},$  $  \tilde{Y} \in \Omega_{\tilde{m}},  $ $Z \in \mathcal{M}^{n \times m},$ and $ T \in \mathcal{R}^{\tilde{n} \times n},$ $ S \in \mathcal{R}^{m \times \tilde{m}}.$
Thus, for a nc function $f\colon \Omega \to \mathcal{N}_{\rm nc}$ where $\Omega \subseteq \mathcal{M}_{\rm nc},$  $\Delta_{R}f(X, Y)(\cdot)$ is a function of  two arguments $X$ and $Y$ (for all $n,$ $ m \in \mathbb{N}$) with values linear mappings $\mathcal{M}^{n \times m} \to \mathcal{N}^{n \times m}$ that respects direct sums and  similarities.

For $\mathcal{M}_{0}$, $\mathcal{M}_{1}$, $\mathcal{N}_{0}$, $\mathcal{N}_{1}$ modules over a unital
commutative ring $\mathcal{R},$ and $\Omega^{0} \subseteq  \mathcal{M}_{0, \rm nc}$,
$\Omega^{1} \subseteq  \mathcal{M}_{1, \rm nc}$ nc sets, we define a \emph{nc function of order 1} to be a
 function on $\Omega^{0} \times \Omega^{1} $ so that for $X^{0} \in \Omega^{0}_{n_{0}} $ and
 $X^{1} \in \Omega^{1}_{n_{1}}$,
\[f(X^{0}, X^{1}) \colon \mathcal{N}_{1}^{n_{0} \times n_{1}} \to
 \mathcal{N}_{0}^{n_{0} \times n_{1}}\]
  is a linear mapping, and  that
 $f$ respects direct sums and similarities in each argument. This class of nc functions of order 1 is denoted by $\mathcal{T}^{1}(\Omega^{(0)}, \Omega^{(1)}; \mathcal{N}_{0, \rm nc}, \mathcal{N}_{1, \rm nc}).$ We also denote by $\mathcal{T}^{0}(\Omega;{\mathcal{N}}_{\rm nc})$ the class of nc functions (of order 0) $f\colon\Omega\to{\mathcal{N}}_{\rm nc}$. It turns out that for $f \in \mathcal{T}^0(\Omega; \mathcal{N}_{\rm nc}),$ one has $\Delta_{R} f \in \mathcal{T}^{1}(\Omega, \Omega; \mathcal{N}_{ \rm nc}, \mathcal{M}_{ \rm nc}).$

More generally, one can define \emph{ nc functions of order }$k.$   Let $\mathcal{M}_{0}$, \ldots, $\mathcal{M}_{k}$,
 $\mathcal{N}_{0}$, \ldots, $\mathcal{N}_{k}$ be modules over a unital commutative ring $\mathcal{R}.$ Let $\Omega^{0} \subseteq  \mathcal{M}_{0, \rm nc}, \hdots,  \Omega^{k} \subseteq  \mathcal{M}_{k, \rm nc}$ be nc sets.  A nc function of order $k$ is  a function of $k + 1$ arguments on $\Omega^{0} \times \cdots \times \Omega^{k} $ so that for
$X^{0} \in \Omega^{0}_{n_{0}}$, \ldots, $X^{k} \in \Omega^{k}_{n_{k}}$,
\[f(X^{0}, \ldots,
X^{k}) \colon \mathcal{N}_{1}^{n_{0} \times n_{1}}\times \cdots
\times \mathcal{N}_{k}^{n_{k - 1} \times n_{k}} \to  \mathcal{N}_{0}^{n_{0} \times n_{k}}\]
  is a $k$-linear mapping, and  that $f$ respects direct sums and similarities in each argument in a way similar to (\ref{eq:intro7}) -- (\ref{eq:intro9}). This class of nc functions of order $k$ is denoted as $\mathcal{T}^{k}(\Omega^{(0)}, \hdots, \Omega^{(k)}; \mathcal{N}_{0, \rm nc}, \hdots, \mathcal{N}_{k, \rm nc}).$

 One extends $\Delta_{R}$ to an operator from $\mathcal{T}^{k}$ to $\mathcal{T}^{k + 1}$ for all $k.$ Similarly to the case $k = 0,$ it is done by evaluating a nc function of order $k (> 0)$ on a $(k + 1)$-tuple of square matrices with one of the arguments block upper triangular. We have therefore
\begin{multline*}
\Delta_{R} \colon  \mathcal{T}^{k}(\Omega^{(0)}, \hdots \Omega^{(k)}; \mathcal{N}_{0,\rm nc}, \hdots \mathcal{N}_{k,\rm nc})\\ \to  \mathcal{T}^{k + 1}(\Omega^{(0)}, \hdots \Omega^{(k)}, \Omega^{(k)}; \mathcal{N}_{0,\rm nc}, \hdots \mathcal{N}_{k,\rm nc}, \mathcal{M}_{k,\rm nc}),
\end{multline*}
for $k = 0, 1, \hdots.$ Iterating this operator $\ell$ times, we obtain the $\ell$-\emph{th order nc difference-differential operator}

\begin{multline*}
\Delta_{R}^{\ell} \colon  \mathcal{T}^{k}(\Omega^{(0)}, \hdots \Omega^{(k)}; \mathcal{N}_{0,\rm nc}, \hdots \mathcal{N}_{k,\rm nc})\\ \to  \mathcal{T}^{k + 1}(\Omega^{(0)}, \hdots \Omega^{(k)}, \underbrace{\Omega^{(k)}, \hdots , \Omega^{(k)}}_{\ell \ {\rm times}}; \mathcal{N}_{0,\rm nc}, \hdots \mathcal{N}_{k,\rm nc}, \underbrace{\mathcal{M}_{k,\rm nc}, \hdots, \mathcal{M}_{k,\rm nc}}_{\ell \ {\rm times}}),
\end{multline*}
for $k = 0, 1, \hdots.$ According to the definition, $\Delta^{\ell}_{R}f$ is calculated iteratively by evaluating nc functions of increasing orders on $2 \times 2$ block upper triangular matrices at each step. It turns out \cite[Theorems 3.11, 3.12]{KV-VV} that $\Delta^{\ell}_{R}f$ can also be calculated in a single step by evaluating $f$ on $(\ell + 1) \times (\ell + 1) $ block upper bidiagonal matrices.  If  $ f: \Omega \to \mathcal{N}_{\rm nc}$ is a nc function (of order $0$), where $\Omega \subseteq \mathcal{M}_{nc}$  is  a right admissible nc set, and  if $X^{0}, \hdots, X^{\ell} \in \Omega, $ then
\begin{multline*}
 \Delta^{\ell}_{R}f(X^{0}, \hdots, X^{\ell})(Z^{1}, \hdots, Z^{\ell})
\mathrel {\mathop:} = f\left (\left [ \begin{array}{ccccc} X^{0}& Z^{1} & 0 & \hdots & 0\\ 0&X^{1} & \ddots & \ddots & \vdots\\ \vdots&\ddots & \ddots & \ddots & Z^{\ell} \\ 0&\hdots & \hdots & 0 & X^{\ell}\end{array} \right ] \right )_{1, \ell + 1}
\end{multline*}
extends to a $\ell $-linear mapping in $Z^{1}, \hdots, Z^{\ell}.$

The following theorem \cite[Theorem 4.1]{KV-VV}) derives an nc analogue of the  Taylor formula, which is called the  Taylor--Taylor (TT) formula after Brook Taylor and Joseph L. Taylor.
\begin{thma}[The Taylor--Taylor Formula]\label{eq:TT}
Let $f \in \mathcal{T}^{0}(\Omega; \mathcal{N}_{\rm nc})$ with $\Omega \subseteq \mathcal{M}_{\rm nc}$ a right admissible nc set, $n \in \mathbb{N},$ and $Y \in \Omega_{s}.$ Then for each $N \in \mathbb{N},$  and $X \in \Omega_{s},$
\begin{multline}\label{eq:intro10}
f(X) = \sum_{\ell = 0}^{N}\Delta_{R}^{\ell}f \underbrace{(Y, \dots, Y)}_{\ell + 1 \ {\rm times}}\underbrace{(X - Y ), \hdots , (X - Y)}_{\ell \ {\rm times}}\\
+ \Delta_{R}^{N + 1}f \underbrace{(Y, \dots, Y, X)}_{N + 1 \ {\rm times}}\underbrace{(X - Y), \hdots, (X - Y)}_{N + 1 {\rm \ times}}.
\end{multline}

\end{thma}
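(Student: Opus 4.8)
The plan is to realize the whole right-hand side of (\ref{eq:intro10}) as the first block row of $f$ evaluated on a \emph{single} $(N+2)\times(N+2)$ block upper bidiagonal matrix, and then to recover $f(X)$ from that one evaluation by an intertwining. Fix $N$ and $X,Y\in\Omega_{s}$. First I would build, by repeated use of right admissibility, a block upper bidiagonal matrix $\mathbf{W}\in\Omega_{(N+2)s}$ with diagonal blocks $Y,\dots,Y,X$ ($N+1$ copies of $Y$) and superdiagonal blocks $\sigma_{1}(X-Y),\dots,\sigma_{N+1}(X-Y)$ for some invertible $\sigma_{1},\dots,\sigma_{N+1}\in\mathcal{R}$. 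The construction must be carried out by appending one block row and column \emph{at the bottom right} at each step: starting from $W_{1}=Y\in\Omega_{s}$, and passing from the $j\times j$ all-$Y$ bidiagonal block $W_{j}\in\Omega_{js}$ to $W_{j+1}\in\Omega_{(j+1)s}$ by one application of right admissibility (the new diagonal block being $Y$ for $j\le N$ and $X$ for $j=N+1$, the new superdiagonal block $\sigma_{j}(X-Y)$). The point of this particular order is that then \emph{every} leading principal block $W_{1},\dots,W_{N+1}$, together with $\mathbf{W}=W_{N+2}$, lies in $\Omega$ — which is what makes all the evaluations below legitimate — at the price of the invertible scalars $\sigma_{j}$ that right admissibility forces onto the superdiagonal.

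Next I would produce the intertwining. Put $c_{N+2}=1$ and $c_{i}=\sigma_{i}c_{i+1}$ for $i=N+1,\dots,1$ (each $c_{i}$ invertible), and let $v$ be the $(N+2)\times1$ block column whose $i$-th block is $c_{i}I_{s}$. A one-line computation gives $\mathbf{W}v=vX$: block row $i\le N+1$ reads $c_{i}Y+c_{i+1}\sigma_{i}(X-Y)=c_{i}X$, which holds by the choice of the $c_{i}$, and block row $N+2$ reads $c_{N+2}X=c_{N+2}X$. By the respect-for-intertwinings property (\ref{eq:intertw}), $f(\mathbf{W})v=vf(X)$; reading off the first block row (the $c_{i}$ are central scalars) yields $\sum_{j=1}^{N+2}c_{j}\,f(\mathbf{W})_{1,j}=c_{1}f(X)$, hence $f(X)=\sum_{j=1}^{N+2}c_{1}^{-1}c_{j}\,f(\mathbf{W})_{1,j}$ with $c_{1}^{-1}c_{j}=(\sigma_{1}\cdots\sigma_{j-1})^{-1}$ (an empty product, i.e.\ $1$, for $j=1$).

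It remains to identify the entries $f(\mathbf{W})_{1,j}$. For $1\le j\le N+1$, applying (\ref{eq:intertw}) to $\mathbf{W}$ with the intertwiner $\left[\begin{smallmatrix}I_{js}\\0\end{smallmatrix}\right]$ (which satisfies $\mathbf{W}\left[\begin{smallmatrix}I_{js}\\0\end{smallmatrix}\right]=\left[\begin{smallmatrix}I_{js}\\0\end{smallmatrix}\right]W_{j}$, since $\mathbf{W}$ is block upper triangular with leading $js\times js$ block $W_{j}$) shows that the leading $js\times js$ block of $f(\mathbf{W})$ equals $f(W_{j})$; in particular $f(\mathbf{W})_{1,j}=f(W_{j})_{1,j}$. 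Because $W_{j}\in\Omega$ is block upper bidiagonal with diagonal $Y,\dots,Y$ and superdiagonal $\sigma_{1}(X-Y),\dots,\sigma_{j-1}(X-Y)$, the block upper bidiagonal description of $\Delta_{R}^{\ell}$ recalled above identifies $f(W_{j})_{1,j}$ with $\Delta_{R}^{j-1}f(Y,\dots,Y)\big(\sigma_{1}(X-Y),\dots,\sigma_{j-1}(X-Y)\big)$, which by $(j-1)$-linearity (together with (\ref{eq:intro2})) equals $(\sigma_{1}\cdots\sigma_{j-1})\,\Delta_{R}^{j-1}f(Y,\dots,Y)\big((X-Y),\dots,(X-Y)\big)$; the weight $c_{1}^{-1}c_{j}$ cancels it exactly. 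For $j=N+2$ the same description applied to $\mathbf{W}$ itself gives $f(\mathbf{W})_{1,N+2}=(\sigma_{1}\cdots\sigma_{N+1})\,\Delta_{R}^{N+1}f(Y,\dots,Y,X)\big((X-Y),\dots,(X-Y)\big)$, and again $c_{1}^{-1}c_{N+2}$ cancels the weight. Substituting into $f(X)=\sum_{j}c_{1}^{-1}c_{j}\,f(\mathbf{W})_{1,j}$ and reindexing $\ell=j-1$ produces exactly (\ref{eq:intro10}).

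The main obstacle is not conceptual but the admissibility bookkeeping of the first step: one needs \emph{all} the leading blocks $W_{j}$ inside $\Omega$ (not merely $\mathbf{W}$), which is precisely what dictates the bottom-right assembly order, and one must keep the scalars $\sigma_{j}$ forced out by right admissibility aligned with the weights $c_{j}$ so that multilinearity of $\Delta_{R}^{\ell}$ and the normalization (\ref{eq:intro2}) cause them all to cancel. As an alternative, one can prove (\ref{eq:intro10}) by induction on $N$: the base case $N=0$ is (\ref{eq:intro6}) after interchanging $X$ and $Y$, and the inductive step amounts to applying the first-order finite-difference identity of the type (\ref{eq:intro6}) to the order-$(N+1)$ nc function $\Delta_{R}^{N+1}f$, differenced in its last matrix argument from $X$ to $Y$ — the work there being to establish that higher-order analogue of (\ref{eq:intro6}).
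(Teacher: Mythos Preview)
The paper does not actually prove this statement: Theorem~\ref{eq:TT} is quoted from \cite[Theorem~4.1]{KV-VV} with no proof supplied here, so there is no in-paper argument to compare against. That said, your plan is correct. The bottom-right assembly of the bidiagonal matrix $\mathbf{W}$ is the right way to keep all leading principal blocks $W_{j}$ inside $\Omega$, the intertwiner $v$ with $\mathbf{W}v=vX$ is checked correctly, the extraction of the leading blocks of $f(\mathbf{W})$ via the column intertwiner $\left[\begin{smallmatrix}I_{js}\\0\end{smallmatrix}\right]$ is legitimate, and the identification $f(W_{j})_{1,j}=\Delta_{R}^{j-1}f(Y,\dots,Y)(\sigma_{1}(X-Y),\dots,\sigma_{j-1}(X-Y))$ is exactly the block-bidiagonal description of $\Delta_{R}^{\ell}$ cited in the paper just before the theorem; the scalars cancel against $c_{1}^{-1}c_{j}$ by the multilinearity that description comes equipped with.

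As to approach: the argument in \cite{KV-VV} is the inductive one you sketch at the end --- the base case is \eqref{eq:intro6}, and the step peels off one order of $\Delta_{R}$ at a time via the first-order finite-difference identity for higher-order nc functions. Your main route instead packages the whole expansion into a single evaluation on one $(N+2)\times(N+2)$ bidiagonal block and a single intertwining, which is slicker and avoids the auxiliary higher-order difference identity; the price is that you must invoke the bidiagonal description of $\Delta_{R}^{\ell}$ (itself a nontrivial result in \cite{KV-VV}) as a black box, whereas the inductive proof builds everything from \eqref{eq:intro6} and the definition of $\Delta_{R}$ on order-$k$ functions.
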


\subsection{The setting of operator spaces}

Next we give the definition of an operator space; see \cite{ER}, \cite{G.Pi}, \cite{Paulsen}, and \cite{Ruan}.   Let $\field$ be a field, $\field=\mathbb{C}$ or
$\field=\mathbb{R}$.
 A vector space $\vecspace{W}$ over $\field$ is called an \emph{operator space} if a
 sequence of Banach--space norms $\|\cdot\|_n$ on $\mat{\vecspace{W}}{n}$, $n=1,2,\ldots,$ is
defined so that the following two
 conditions hold:
 \begin{enumerate}
     \item [1.]For every $n,m\in\mathbb{N}$, $X\in\mat{\vecspace{W}}{n},$ and
     $Y\in\mat{\vecspace{W}}{m}$,
     \begin{equation}\label{eq:opscond1}
 \| X\oplus Y\|_{n+m}=\max\{\|X\|_n,\| Y\|_m\};
     \end{equation}
     \item[2.] For every $n\in\mathbb{N}$, $X\in\mat{\vecspace{W}}{n},$ and
     $S,T\in\mat{\field}{n}$,
     \begin{equation*}
 \| SXT\|_{n}\le\|S\|\,\|X\|_n\|T\|,
     \end{equation*}
    where $\|\cdot\|$ denotes the $(2,2)$ operator norm on
    $\mat{\field}{n}$.
 \end{enumerate}

Let  $\vecspace{W}$ be an operator space. For $Y \in \vecspace{W}^{s \times s}$ and $r > 0,$ define a
\emph{nc ball centered at $Y$ of radius $r$} as
\begin{multline*}\label{eq:intro11}
B_{\rm nc}(Y, r) := \coprod_{m=1}^{\infty} B\Big (Y^{(m)}, r\Big)
= \coprod_{m=1}^{\infty}\left \{ X \in \vecspace{W}^{sm \times sm}
\colon \left \| X - Y^{(m)}\right \|_{sm}  < r\right \},
\end{multline*}
where $Y^{(m)}\mathrel {\mathop:} =  \bigoplus_{ 1}^{m}Y \cong I_{ m} \otimes Y.$ Clearly, nc balls are nc sets.
By \cite[Proposition 7.12]{KV-VV},  the nc balls form a basis for a topology on $\vecspace{W}_{\rm nc}.$ This topology is called the uniformly-open topology. Open sets in the uniformly-open topology on $\vecspace{W}_{\rm nc}$ are called \emph{uniformly open}. Notice that uniformly open nc sets are right admissible.

Let $\vecspace{V},  \vecspace{W}$ be  operator spaces, and let $\Omega \subseteq \vecspace{V}_{\rm nc}$ be a uniformly open nc set. A nc function $f\colon \Omega \to \vecspace{W}_{\rm nc} $ is called \emph{uniformly locally bounded} if for any $s \in \mathbb{N}$ and $Y \in \vecspace{V}^{s \times s}$ there exists  $r > 0$ such that $B_{\rm nc}(Y, r) \subseteq \Omega$ and $f$ is bounded on $B_{\rm nc}(Y, r),$ i.e., there is  $M > 0 $ such that $\left \| f(X)\right \|_{sm}  < M$ for all $m \in \mathbb{N}$ and $X \in B_{\rm nc}(Y, r)_{sm}.$  A nc function $f\colon \Omega \to \vecspace{W}_{\rm nc} $ is called \emph{uniformly analytic} if $f$  is uniformly locally bounded and Gateaux (G-)differentiable. A nc function   $f \colon \Omega \to \mathcal{W}_{nc} $ is called G-differentiable if for every $n \in \mathbb{N}$  the function $f|_{\Omega_{n}}$ is G-differentiable, i.e., for every  $X \in \Omega_{n}$ and $ Z \in \mathcal{V}^{n \times n}$ the G-derivative of $f$ at $X$ in the direction $Z,$
\begin{equation*}
\delta f(X)( Z) = \lim_{t \to 0} \frac{f(X+ tZ) - f(X)}{t} = \frac{d}{dt}f(X + tZ)\Big|_{t = 0},
\end{equation*}
exists. We note that $\delta f(X)( Z) = \Delta_{R}f(X, X)(Z),$ see \cite[Theorem 7.2]{KV-VV}.
Theorem \ref{thm:intro2} below \cite[Corollary 7.26]{KV-VV}) states that a uniformly locally bounded nc function is uniformly analytic, and its TT series converges uniformly and absolutely on uniformly open nc balls.

We will use the following notations. By $\odot_{s}$ we denote the multiplication of matrices over the tensor algebra
\[ {\bf T} (\mathcal{M}^{s \times s}) =  \bigoplus_{ \ell = 0}^{\infty} (\mathcal{M}^{s \times s})^{\otimes\ell} \]
where $\mathcal{M}$ is a module over a unital commutative ring $\mathcal{R}.$  That is, if
\[ X \in \left [(\mathcal{M}^{s \times s})^{\otimes\ell}\right]^{n \times m} \cong \mathcal{M}^{s^{\ell}n \times s^{\ell}m } \,\, \,{\rm and} \,\,\,
Y \in \left [(\mathcal{M}^{s \times s})^{\otimes r}\right]^{m \times p} \cong \mathcal{M}^{s^{r}m \times s^{r}p },\]
then we have
\[\Big(X \odot_{s}Y\Big)_{ij} = \sum_{k = 1}^{m}X_{ik} \otimes Y_{kj} \in (\mathcal{M}^{s \times s})^{\otimes (\ell + r)} \]
so that
\[X \odot_{s}Y \in \left [(\mathcal{M}^{s \times s})^{\otimes (\ell + r)}\right ] ^{n \times p} \cong  \mathcal{M}^{s^{\ell + r}n \times s^{\ell + r}p }.\]

We will identify multilinear forms $g \colon (\mathcal{M}^{s \times s})^{\ell} \to \mathcal{N}^{s \times s}$ with linear mappings $g \colon (\mathcal{M}^{s \times s})^{\otimes\ell} \to \mathcal{N}^{s \times s},$ and write $(Z^{1} \odot_{s} \cdots  \odot_{s} Z^{\ell})g \mathrel {\mathop:} =g(Z^{1}, \hdots, Z^{\ell}).$ We  then extend $g$ to matrices $A$ over ${\bf T} (\mathcal{M}^{s \times s})$ by defining $g(A)_{ij} = g(A_{ij})$ so that
\[g \colon \left[(\mathcal{M}^{s \times s})^{\otimes\ell} \right]^{m \times m}\to (\mathcal{N}^{s \times s})^{m \times m} \cong \mathcal{N}^{sm \times sm}.\]
In particular, the terms in the TT series centered at $Y \in \mathcal{M}^{s \times s}$ are written as
\[\Big (X - Y^{(m)}\Big )^{\odot_{s} \ell}\Delta_{R}^{\ell}f \underbrace{(Y, \dots, Y)}_{\ell + 1 \ {\rm times}} = \Delta_{R}^{\ell}f \underbrace{(Y, \dots, Y)}_{\ell + 1 \ {\rm times}}\underbrace{\Big (X - Y^{(m)}, \ldots, X - Y^{(m)}\Big )}_{\ell \ {\rm times}},\]
where $X \in \mathcal{M}^{sm \times sm};$
see \cite[Chapter 4]{KV-VV} for details.

\begin{thma}\cite[Corollary 7.26]{KV-VV} \label{thm:intro2}
Let a nc function  $f\colon \Omega \to \vecspace{W}_{\rm nc} $ be uniformly locally bounded. Let  $s \in \mathbb{N},$  $ Y \in \Omega_{s}, $ and let
\[\delta  \mathrel {\mathop:} = \sup \{r > 0\colon f \,\, {\rm is \,\, bounded \,\, on \,\,  B_{\rm nc}(Y, r)}\}.\]  Then
\[
 f(X) = \sum_{\ell = 0}^{\infty}\Big (X - Y^{(m)}\Big )^{\odot_{s} \ell}\Delta_{R}^{\ell}f \underbrace{(Y, \dots, Y)}_{\ell + 1 \ {\rm times}}
\]
holds, with the TT series convergent absolutely and uniformly  on every  nc ball $B_{\rm nc}(Y, r)$ with $r < \delta.$ Moreover,
\[
\sum_{\ell = 0}^{\infty} \sup_{m \in \mathbb{N}, X \in B_{\rm nc}(Y, r)_{sm}}\Big \| \Big (X - Y^{(m)}\Big )^{\odot_{s} \ell}\Delta_{R}^{\ell}f \underbrace{(Y, \dots, Y)}_{\ell + 1 \ {\rm times}} \Big\|_{sm} < \infty.
\]
\end{thma}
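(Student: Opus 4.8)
The plan is to deduce Theorem~\ref{thm:intro2} from the Taylor--Taylor formula (Theorem~\ref{eq:TT}) together with a Cauchy-type estimate for the remainder obtained from the hypothesis of uniform local boundedness. Fix $s\in\mathbb N$, $Y\in\Omega_s$, and a radius $r<\delta$, and pick $\rho$ with $r<\rho<\delta$ so that $f$ is bounded, say by some $M>0$, on $B_{\rm nc}(Y,\rho)$ in all matrix sizes. First I would record the finite TT expansion \eqref{eq:intro10} at $Y$: for every $m\in\mathbb N$, every $N\in\mathbb N$, and every $X\in\Omega_{sm}$,
\[
f(X)=\sum_{\ell=0}^N\big(X-Y^{(m)}\big)^{\odot_s\ell}\Delta_R^\ell f(Y,\dots,Y)
+\big(X-Y^{(m)}\big)^{\odot_s(N+1)}\Delta_R^{N+1}f(Y,\dots,Y,X),
\]
where I have rewritten both the partial sum and the remainder using the $\odot_s$ notation introduced just before the statement. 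Thus the theorem reduces to two claims: (i) the partial sums converge, with the stated absolute/uniform control on $B_{\rm nc}(Y,r)$, and (ii) the remainder term tends to $0$ uniformly on $B_{\rm nc}(Y,r)$ as $N\to\infty$.

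The heart of the argument is a Cauchy estimate on the coefficients $\Delta_R^\ell f(Y,\dots,Y)$, viewed as $\ell$-linear forms on $\vecspace V^{s\times s}$ with values in $\vecspace W^{s\times s}$ (equivalently as linear maps on the tensor power). By the single-step formula recalled in the excerpt, $\Delta_R^\ell f(Y,\dots,Y)(Z^1,\dots,Z^\ell)$ is the $(1,\ell+1)$ block of $f$ evaluated at the $(\ell+1)\times(\ell+1)$ block bidiagonal matrix with diagonal blocks $Y$ and superdiagonal blocks $Z^1,\dots,Z^\ell$. The strategy is the classical one-complex-variable Cauchy estimate applied in the ``scaling'' direction: for a scalar $\lambda$ with $|\lambda|$ small, the matrix with diagonal $Y^{(m)}$ and superdiagonal blocks $\lambda Z^j$ is a similarity transform (by a diagonal matrix $\operatorname{diag}(1,\lambda,\dots,\lambda^\ell)\otimes I$) of the one with superdiagonal $Z^j$ multiplied by $\lambda$ in a graded way; more directly, evaluating $f$ on the bidiagonal matrix with superdiagonal blocks $t Z^j$ gives, by \eqref{eq:intro10} applied blockwise or by the homogeneity built into the $\odot_s$ notation, a function of $t$ whose $(1,\ell+1)$ block is a polynomial in $t$ with the $\Delta_R^\ell$ coefficient appearing at order $t^\ell$. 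Applying the operator-space axiom \eqref{eq:opscond1} to control the norm of the bidiagonal matrix by $\max(\|Y^{(m)}-\text{(something)}\|,\dots)$ — more precisely, choosing the $Z^j$ of norm $<\rho/\ell$ or normalizing and absorbing factors — one gets that the bidiagonal argument lies in $B_{\rm nc}(Y,\rho)$, so its image under $f$ has norm $<M$, and a Cauchy integral in $t$ over a circle of radius slightly less than $1$ yields
\[
\Big\|\big(X-Y^{(m)}\big)^{\odot_s\ell}\Delta_R^\ell f(Y,\dots,Y)\Big\|_{sm}\le M\Big(\frac{r}{\rho}\Big)^{\ell}
\]
for $X\in B_{\rm nc}(Y,r)_{sm}$, uniformly in $m$. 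Summing the geometric series $\sum_\ell M(r/\rho)^\ell<\infty$ gives both the absolute/uniform convergence of the partial sums and the last displayed inequality of the theorem.

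For (ii), I would estimate the remainder similarly: $\Delta_R^{N+1}f(Y,\dots,Y,X)$ is the $(1,N+2)$ block of $f$ on the bidiagonal matrix with first $N+1$ diagonal blocks equal to $Y^{(m)}$, last block $X$, and superdiagonal blocks $X-Y^{(m)}$; when $\|X-Y^{(m)}\|_{sm}<r$ this argument still lies in $B_{\rm nc}(Y,\rho)$ (the operator-space norm of a bidiagonal matrix is controlled by the max of block norms along with the off-diagonal terms, and by rescaling as above one lands inside radius $\rho$), so by the same Cauchy estimate the remainder is bounded by $M(r/\rho)^{N+1}\to0$ uniformly on $B_{\rm nc}(Y,r)$. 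Combining (i) and (ii) with \eqref{eq:intro10} gives pointwise convergence of the TT series to $f(X)$, and the uniform estimate upgrades this to absolute and uniform convergence on each $B_{\rm nc}(Y,r)$ with $r<\delta$. The main obstacle I anticipate is purely bookkeeping rather than conceptual: making the reduction to a single scalar variable $t$ completely precise — i.e.\ verifying that evaluating $f$ on the $t$-scaled bidiagonal matrix really does produce, in its corner block, an analytic (indeed polynomial-plus-remainder) function of $t$ with the $\Delta_R^\ell$ coefficients in the right places, and that the operator-space norm of that bidiagonal matrix stays below $\rho$ for $|t|\le 1$ after the appropriate normalization of the $Z^j$. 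This requires care with the $\odot_s$ identification, the identification $(\vecspace W^{s\times s})^{m\times m}\cong\vecspace W^{sm\times sm}$, and the homogeneity of $\Delta_R^\ell$ in its direction arguments, but no genuinely new idea beyond the one-variable Cauchy estimate and the two operator-space axioms.
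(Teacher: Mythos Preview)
The paper does not contain its own proof of this statement: Theorem~\ref{thm:intro2} is quoted verbatim as \cite[Corollary~7.26]{KV-VV} and is used only as background for the implicit/inverse function theorems proved in Section~\ref{sec:proof1}. There is therefore nothing in the present paper to compare your proposal against.

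That said, your strategy is the standard one and is essentially what one finds in \cite{KV-VV}: Taylor--Taylor formula plus a Cauchy-type bound obtained by evaluating $f$ on block upper-bidiagonal matrices and using uniform local boundedness. Your coefficient estimate is correct. For $Z^1,\dots,Z^\ell\in\mathcal V^{sm\times sm}$ with $\|Z^i\|<\rho$, the bidiagonal matrix with diagonal $Y^{(m)}$ and superdiagonal $Z^i$ differs from $Y^{((\ell+1)m)}$ by a matrix that factors as $(J\otimes I)(\bigoplus_i Z^i)$ with the shift $J$ of norm $1$, hence has norm $\le\max_i\|Z^i\|<\rho$; extracting the $(1,\ell+1)$ corner and using homogeneity gives $\|(X-Y^{(m)})^{\odot_s\ell}\Delta_R^\ell f(Y,\dots,Y)\|\le M(r/\rho)^\ell$ directly, so no contour integral is actually needed here.

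The remainder step, however, has a genuine gap as you have written it. For the bidiagonal matrix $B$ with diagonal $(Y^{(m)},\dots,Y^{(m)},X)$ and superdiagonal $Z=X-Y^{(m)}$, the difference $B-Y^{((N+2)m)}$ has $Z$ in the last diagonal block in addition to the superdiagonal; one computes $\|B-Y^{((N+2)m)}\|=\|tJ+e_{N+2}e_{N+2}^T\|\cdot\|Z\|=\sqrt{t^2+1}\,\|Z\|$ after rescaling the superdiagonal by $t$, and this is never below $\|Z\|$. Thus the claim that ``by rescaling as above one lands inside radius $\rho$'' fails when $\|Z\|$ is close to $\rho$: similarity conjugation by a diagonal scalar matrix leaves the diagonal block $X$ fixed, so you cannot push $B$ into $B_{\rm nc}(Y,\rho)$ unless $\|Z\|<\rho/\sqrt2$. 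Your argument as written therefore proves the theorem only on $B_{\rm nc}(Y,\delta/\sqrt2)$, not on every $B_{\rm nc}(Y,r)$ with $r<\delta$. The standard fix is to argue via the single complex variable $\lambda\mapsto f(Y^{(m)}+\lambda Z)$: uniform local boundedness forces this slice to be analytic on $\{|\lambda|<\rho/\|Z\|\}$ with Taylor coefficients exactly $\Delta_R^\ell f(Y,\dots,Y)(Z,\dots,Z)$, and the ordinary one-variable Cauchy remainder estimate then gives $\|R_N\|\le M(r/\rho)^{N+1}/(1-r/\rho)\to0$ for all $r<\rho<\delta$. That is the route taken in \cite{KV-VV}; your bidiagonal picture is the right intuition for the coefficient bound but is not by itself sufficient for the remainder.
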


Let $\mathcal{V}$ and $\mathcal{W}$ be operator spaces. For an operator $A\colon \mathcal{V} \to \mathcal{W}$ we set
$A^{(n)} ={\rm id}_{n} \otimes A,$  where ${\rm id}_{ n}: \mathbb{F}^{n \times n} \rightarrow  \mathbb{F}^{n \times n}$ is the identity operator,  i.e.,  ${\rm id}_{ n}X = X,$   so that  $A^{(n)}$ can be identified with an operator from $\mathcal{V}^{n \times n}$ to $ \mathcal{W}^{n \times n}$ as follows
\[\left[ A^{(n)}(X)\right ]_{ij} = A(X_{ij}).\]

We say that $A$  is completely bounded if $\left\|  A^{(n)} \right\| \leq C$ for all $n \in \mathbb{N},$ and a  constant $C$ is  independent of $n.$

The space of completely bounded operators $A$ from $\mathcal{V}$
to $\mathcal{W}$ is denoted by $\mathcal{L}_{\rm cb}(\mathcal{V},
\mathcal{W}),$ where the norm of $A$ is given by $\left\|  A
\right\|_{\mathcal{L}_{\rm cb}(\mathcal{V}, \mathcal{W})} =
\sup_{n \in \mathbb{N}}\left\|  A^{(n)} \right\|.$

Our main results in the setting of operator spaces are the following two theorems.
\begin{thma}[Implicit nc function theorem] \label{thm:ops-1}
Let $\mathcal{X}, \mathcal{Y},$ and $\mathcal{Z}$ be operator spaces over the field $\mathbb{F},$ $\mathbb{F} = \mathbb{C}$ or $\mathbb{F} = \mathbb{R},$
 and $\Omega$   a uniformly open nc set in $(\mathcal{X}\times \mathcal{Y})_{\rm nc}.$
 Let $s \in \mathbb{N}$ and
  $(X^{0}, Y^{0})\in \Omega_s$. Let $F\colon \Omega  \rightarrow {\mathcal{Z}}_{\rm nc}$ be a nc function satisfying the following conditions:
\begin{enumerate}
\item [1.]\label{eq:cond1}
$F(X^{0}, Y^{0}) = 0.$
 \item[2.] \label{eq:cond2}
$F$ is continuous  at  $(X^{0}, Y^{0})$ with respect to the uniformly--open topologies on $(\mathcal{X}\times \mathcal{Y})_{\rm nc}$ and $\mathcal{Z}_{\rm nc}.$
\item [3.]\label{eq:cond3}
\[\delta^{Y}F(X^{0}, Y^{0}) \in \mathcal{L}_{\rm cb}(\mathcal{Y}^{s \times s}, \mathcal{Z}^{s \times s})\] is  invertible and \[\delta^{Y}F(X^{0}, Y^{0})^{-1} \in \mathcal{L}_{\rm cb}(\mathcal{Z}^{s \times s}, \mathcal{Y}^{s \times s}),\]
where
\begin{equation*}
\delta^{Y} F(X^{0}, Y^{0})( Z) = \lim_{t \to 0} \frac{F( X^{0}, Y^{0}+ tZ) - F(X^{0}, Y^{0})}{t}.
\end{equation*}
\end{enumerate}
Then:
\begin{enumerate}
\item [I.]\label{eq:concl1}
There exist $\alpha,$ $\beta  > 0$ such that for every $ m \in \mathbb{N},$
\[ B(X^{0(m)}, \alpha) \times B(Y^{0(m)}, \beta) \subset \Omega_{sm}.\]

\item [II.]\label{eq:concl2}
There exists a nc function $ f  \colon B_{\rm nc}(X^{0}, \alpha) \rightarrow B_{\rm nc}(Y^{0},\beta), $ such that
\begin{enumerate}
\item For $(X, Y) \in  B(X^{0(m)}, \alpha) \times B(Y^{0(m)}, \beta),$
\[F(X,Y) = 0 \,\, {\rm if \,\, and \,\, only \,\,  if } \,\,  Y = f (X).\]
\item $f$ is uniformly analytic on $B_{\rm nc}(X^{0}, \alpha).$
\end{enumerate}
\item [III.]\label{eq:concl3}
For every $X \in B_{\rm nc}(X^{0}, \alpha), $ the operator $\delta^{X} F(X, f(X))$ has a completely bounded inverse, and
\[\delta f(X) = - \Big(\delta^{Y}F(X, f(X))\Big)^{-1}\delta^{X}F(X, f(X)),\]
where
\begin{equation*}
\delta^{X} F(X, Y)(Z) = \lim_{t \to 0} \frac{F( X+ tZ, Y) - F(X, Y)}{t}.
\end{equation*}
\end{enumerate}
\end{thma}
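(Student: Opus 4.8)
The plan is to imitate the classical Banach-space implicit function theorem, but carried out ``at all levels $m$ simultaneously'' and with all the estimates uniform in $m$, so that the resulting solution map is itself a nc function in the sense of \cite{KV-VV}. The key technical device is the nc finite-difference formula \eqref{eq:intro6}: for $(X,Y_1),(X,Y_2)$ in a small nc ball,
\[
F(X,Y_2)-F(X,Y_1)=\Delta_R F\big((X,Y_1),(X,Y_2)\big)\big((0,Y_2-Y_1)\big),
\]
and the partial version gives $\delta^Y F(X,Y)(Z)=\Delta_R F((X,Y),(X,Y))((0,Z))$. Since $\Delta_R F$ respects direct sums, $\delta^Y F$ at the point $(X^{0(m)},Y^{0(m)})$ is just the $m$-th amplification $\big(\delta^Y F(X^0,Y^0)\big)^{(m)}$ of the operator at level $s$; complete boundedness of $\delta^Y F(X^0,Y^0)^{-1}$ is exactly what makes these amplified inverses uniformly bounded in $m$. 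This is the whole reason the operator-space/completely bounded hypotheses appear, and it is what replaces the single ``invertible derivative'' hypothesis of the scalar theorem.

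First I would prove conclusion~I: by continuity of $F$ at $(X^0,Y^0)$ in the uniformly-open topology, and by local boundedness of $\Delta_R F$ near $\big((X^0,Y^0),(X^0,Y^0)\big)$ (here one invokes that uniformly open nc sets are right admissible and that $\Delta_R$ of a uniformly locally bounded nc function is again uniformly locally bounded, via the $2\times2$ block evaluation), choose $\alpha,\beta>0$ so small that $B_{\rm nc}(X^0,2\alpha)\times B_{\rm nc}(Y^0,2\beta)\subseteq\Omega$, and on this set the ``remainder'' operator $R(X,Y):=\delta^Y F(X^0,Y^0)-\Delta_R F((X^0,Y^0),(X,Y))(0,\cdot)$ has completely bounded norm $\le \tfrac12\big\|\delta^Y F(X^0,Y^0)^{-1}\big\|_{\rm cb}^{-1}$, and $\|F(X,Y^{0(m)})\|_{sm}$ is small for $X\in B(X^{0(m)},\alpha)$, uniformly in $m$. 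Then for each fixed $m$ and each $X\in B(X^{0(m)},\alpha)$ I would run the Banach fixed-point iteration on the map
\[
\Phi_X(Y):=Y-\big(\delta^Y F(X^0,Y^0)^{(m)}\big)^{-1}F(X,Y)
\]
on the closed ball $\overline{B(Y^{0(m)},\beta)}\subseteq\mathcal Y^{sm\times sm}$; the choice of $\alpha,\beta$ makes $\Phi_X$ a contraction of this ball into itself, with contraction constant $\le\tfrac12$ independent of $m$ and $X$. This yields a unique solution $Y=f^{(m)}(X)$ with $F(X,Y)=0$, giving conclusion~II(a). Using $F(X,f(X))-F(X',f(X'))=\Delta_R F(\cdots)(X-X',f(X)-f(X'))$ and solving for $f(X)-f(X')$ via the same Neumann-series argument, one gets a uniform (in $m$) Lipschitz, hence uniformly locally bounded, estimate on $f$.

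The step that I expect to be the main obstacle — and the one that genuinely uses the nc structure rather than just doing Banach calculus level by level — is showing that the family $\{f^{(m)}\}_m$ assembles into a single nc function: it must respect direct sums and similarities. Respecting direct sums follows from uniqueness of the fixed point together with the fact that $F(X'\oplus X'',Y'\oplus Y'')=F(X',Y')\oplus F(X'',Y'')$, so $f(X')\oplus f(X'')$ solves the equation at $X'\oplus X''$ and therefore equals $f(X'\oplus X'')$ — provided $f(X')\oplus f(X'')$ lies in the relevant ball, which \eqref{eq:opscond1} guarantees. Respecting similarities (equivalently intertwinings) is handled the same way: if $S$ is invertible with $SXS^{-1}\in B(X^{0(m)},\alpha)$ then $SF(X,Y)S^{-1}=F(SXS^{-1},SYS^{-1})$, so $Sf(X)S^{-1}$ solves the equation at $SXS^{-1}$, and uniqueness forces $f(SXS^{-1})=Sf(X)S^{-1}$; here one must be slightly careful because a similarity need not preserve the norm balls, so I would first establish the intertwining identity $f(X)T=Tf(Y)$ directly from \eqref{eq:intro7}--\eqref{eq:intro9} applied to $F$ and the uniqueness of solutions, which is the clean way to get similarities without norm-ball gymnastics. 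Once $f$ is known to be a nc function and uniformly locally bounded, Theorem~\ref{thm:intro2} upgrades it to uniformly analytic, finishing II(b). Finally, for conclusion~III, differentiating the identity $F(X,f(X))=0$ — legitimate because $F$ is G-differentiable and $f$ is now analytic, using the chain rule $\delta\big(F(\cdot,f(\cdot))\big)(X)(Z)=\delta^X F(X,f(X))(Z)+\delta^Y F(X,f(X))\big(\delta f(X)(Z)\big)$ — gives the stated formula for $\delta f$, once one checks that $\delta^Y F(X,f(X))$ stays completely boundedly invertible on $B_{\rm nc}(X^0,\alpha)$; that last point is again a Neumann-series perturbation of $\delta^Y F(X^0,Y^0)$, controlled by the same estimate on $R$ (now evaluated along the graph of $f$) after shrinking $\alpha,\beta$ if necessary. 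The complete boundedness of $\delta^X F(X,f(X))$ is automatic since $F$ is uniformly locally bounded and hence its G-derivatives are completely bounded.
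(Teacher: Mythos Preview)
Your plan is essentially the paper's: define $\Phi_X(Y)=Y-(\delta^YF(X^0,Y^0)^{(m)})^{-1}F(X,Y)$, use the complete boundedness of $(\delta^YF(X^0,Y^0))^{-1}$ to get a contraction constant $\le\tfrac12$ uniformly in $m$, extract the unique fixed point $f(X)$ levelwise, check it is a nc function, and then read off analyticity and the derivative formula. The paper obtains the contraction estimate by bounding $\|\delta g_X(Y)\|$ and invoking the Banach-space mean value theorem, rather than through $\Delta_RF$ directly as you propose; either route works.

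The one place where your outline is thinner than it looks is the similarity step. You correctly flag that $Sf(X)S^{-1}$ need not lie in $\overline{B(Y^{0(m)},\beta)}$, so uniqueness cannot be invoked directly; but your proposed fix, to ``establish the intertwining identity $f(X)T=Tf(Y)$ directly from \eqref{eq:intro7}--\eqref{eq:intro9} applied to $F$ and uniqueness,'' does not go through as stated: for a general intertwiner $T$ there is no reason the fixed-point iterates (or the base points $Y^{0(m)}$, $Y^{0(\tilde m)}$) satisfy the corresponding intertwining, so you cannot bootstrap uniqueness. The paper's device is worth knowing: having already proved $f$ respects direct sums, set $X'=SXS^{-1}$, note that $X'\oplus X$ commutes with
\[
T_c=\begin{bmatrix}I & -cS\\ 0 & I\end{bmatrix}
\]
for every scalar $c$, and choose $c\neq 0$ small enough that $T_c\,f(X'\oplus X)\,T_c^{-1}\in B_{\rm nc}(Y^0,\beta)$. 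Then $T_c f(X'\oplus X)T_c^{-1}$ is another zero of $F(X'\oplus X,\cdot)$ inside the ball, so by uniqueness it equals $f(X'\oplus X)=f(X')\oplus f(X)$; reading the $(1,2)$ block gives $f(X')S=Sf(X)$. This is exactly the ``norm-ball gymnastics'' you hoped to avoid, but it is the actual content of the step.
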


\begin{thma}[Inverse nc function theorem]\label{thm:ops-2}
 Let $\mathcal{X}$ and $\mathcal{Y}$ be operator spaces,
and  $\Omega^{Y}$  a uniformly open  nc set in $\mathcal{Y}_{\rm nc}.$
 Let $s \in \mathbb{N}$
 and $Y^{0}\in \Omega^{Y}_{s}.$
 Let $g\colon \Omega^{Y}  \rightarrow {\mathcal{X}}_{\rm nc}$ be a nc function satisfying the following conditions:
\begin{enumerate}
\item [1.]$g$ is uniformly analytic on $\Omega^{Y},$ \item [2.] $
\delta g(Y^{0})$  is invertible and $(\delta g(Y^{0}))^{-1} \in
\mathcal{L}_{\rm cb}(\mathcal{X}^{s \times s}, \mathcal{Y}^{s
\times s}). $
\end{enumerate}
Then there exist a uniformly open nc neighborhood $\Delta$ of
$Y^{0}$ in $\mathcal{Y}_{\rm nc}$, and a uniformly open nc
neighborhood $\Gamma$ of $X^{0}=g(Y^0)$ in $\mathcal{X}_{\rm nc}$
such that
\begin{enumerate}
\item[I.]  \label{eq:9} The mapping $g|_{\Delta} \colon \Delta \rightarrow \Gamma$ is a homeomorphism.

\item [II.]\label{eq:10}The mapping $f \colon \Gamma \rightarrow \Delta$, the inverse of $g$, is a uniformly analytic nc function, and
$\delta f(X) = \Big(\delta g(f(X))\Big)^{-1}$
for every $X \in \Gamma$.
\end{enumerate}
\end{thma}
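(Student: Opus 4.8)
The plan is to deduce Theorem \ref{thm:ops-2} from the implicit nc function theorem (Theorem \ref{thm:ops-1}). Equip $\mathcal{X}\times\mathcal{Y}$ with the operator space structure $\|(X,Y)\|_{n}=\max\{\|X\|_{n},\|Y\|_{n}\}$, so that $(\mathcal{X}\times\mathcal{Y})^{n\times n}\cong\mathcal{X}^{n\times n}\times\mathcal{Y}^{n\times n}$ and every nc ball in $(\mathcal{X}\times\mathcal{Y})_{\rm nc}$ is a product of nc balls; in particular $\Omega:=\mathcal{X}_{\rm nc}\times\Omega^{Y}$ is a uniformly open nc set. Set $X^{0}:=g(Y^{0})$ and define $F\colon\Omega\to\mathcal{X}_{\rm nc}$ by $F(X,Y):=X-g(Y)$. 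Then $F$ is a nc function — it respects direct sums and similarities because $g$ and the identity do — and it is uniformly analytic, hence uniformly locally bounded and continuous, since $g$ is. Moreover $F(X^{0},Y^{0})=0$, $F$ is continuous at $(X^{0},Y^{0})$, and a direct limit computation gives $\delta^{X}F(X,Y)=\mathrm{id}$ and $\delta^{Y}F(X,Y)=-\delta g(Y)$ at every matrix level. Since $g$ is uniformly analytic, $\delta g(Y^{0})=\Delta_{R}g(Y^{0},Y^{0})$ is completely bounded, so $\delta^{Y}F(X^{0},Y^{0})=-\delta g(Y^{0})\in\mathcal{L}_{\rm cb}(\mathcal{Y}^{s\times s},\mathcal{X}^{s\times s})$, and by hypothesis 2 it is invertible with completely bounded inverse. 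Thus all hypotheses of Theorem \ref{thm:ops-1} hold, with $\mathcal{Z}=\mathcal{X}$.

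Applying Theorem \ref{thm:ops-1} produces $\alpha,\beta>0$ with $B(X^{0(m)},\alpha)\times B(Y^{0(m)},\beta)\subset\Omega_{sm}$ for every $m$ — in particular $B_{\rm nc}(Y^{0},\beta)\subseteq\Omega^{Y}$ — together with a uniformly analytic nc function $f\colon B_{\rm nc}(X^{0},\alpha)\to B_{\rm nc}(Y^{0},\beta)$ such that, for $(X,Y)$ in the product ball, $F(X,Y)=0$ if and only if $Y=f(X)$. Equivalently: $g(f(X))=X$ for every $X\in B_{\rm nc}(X^{0},\alpha)$; and if $Y\in B_{\rm nc}(Y^{0},\beta)$ satisfies $g(Y)\in B_{\rm nc}(X^{0},\alpha)$, then $Y=f(g(Y))$. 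Put $\Gamma:=B_{\rm nc}(X^{0},\alpha)$ and $\Delta:=g^{-1}(\Gamma)\cap B_{\rm nc}(Y^{0},\beta)$. Because $g$ is continuous and respects direct sums, $g^{-1}(\Gamma)$ is a uniformly open nc set, hence so is $\Delta$; and $Y^{0}\in\Delta$ since $g(Y^{0})=X^{0}\in\Gamma$.

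It remains to see that $g|_{\Delta}\colon\Delta\to\Gamma$ and $f|_{\Gamma}$ are mutually inverse homeomorphisms. For $X\in\Gamma$ one has $f(X)\in B_{\rm nc}(Y^{0},\beta)$ and $g(f(X))=X\in\Gamma$, so $f(X)\in\Delta$; thus $f$ maps $\Gamma$ into $\Delta$, $g\circ f=\mathrm{id}_{\Gamma}$, and therefore $g(\Delta)=\Gamma$. Conversely, for $Y\in\Delta$ put $X:=g(Y)\in\Gamma$; then $f(X),Y\in B_{\rm nc}(Y^{0},\beta)$ and $g(f(X))=X=g(Y)$, so the ``only if'' clause above forces $f(g(Y))=f(X)=Y$, i.e.\ $f\circ g=\mathrm{id}_{\Delta}$. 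Hence $g|_{\Delta}$ is a bijection onto $\Gamma$ with inverse $f|_{\Gamma}$; both maps are continuous — indeed uniformly analytic nc functions — so $g|_{\Delta}$ is a homeomorphism, giving conclusion~I. For conclusion~II, $f$ is a nc function, uniformly analytic by Theorem \ref{thm:ops-1}(II)(b), and Theorem \ref{thm:ops-1}(III) gives, for $X\in\Gamma$, that $\delta^{Y}F(X,f(X))$ is invertible with completely bounded inverse and $\delta f(X)=-\bigl(\delta^{Y}F(X,f(X))\bigr)^{-1}\delta^{X}F(X,f(X))$; since $\delta^{Y}F(X,f(X))=-\delta g(f(X))$ and $\delta^{X}F(X,f(X))=\mathrm{id}$, this reads: $\delta g(f(X))$ has a completely bounded inverse and $\delta f(X)=\bigl(\delta g(f(X))\bigr)^{-1}$.

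Since the analytic substance is carried by Theorem \ref{thm:ops-1}, I expect the only delicate points to be bookkeeping. The first is confirming hypothesis 3 for $F$: one must verify that $\delta^{Y}F(X^{0},Y^{0})$, regarded as an operator $\mathcal{Y}^{s\times s}\to\mathcal{X}^{s\times s}$, equals $-\delta g(Y^{0})$ and lies in $\mathcal{L}_{\rm cb}$ — which rests on the fact that the G-derivative of a uniformly analytic nc function is completely bounded. The second, and main, point is upgrading the one-sided identity $g\circ f=\mathrm{id}_{\Gamma}$ to a genuine two-sided inverse on a set $\Delta$ that is at once a nc set, uniformly open, and a neighbourhood of $Y^{0}$; here the uniqueness half of Theorem \ref{thm:ops-1}(II)(a) — that $F(X,Y)=0$ has at most one solution $Y$ in the relevant ball — is precisely what yields injectivity of $g$ on $\Delta$, hence $f\circ g=\mathrm{id}_{\Delta}$.
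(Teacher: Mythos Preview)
Your proof is correct and follows essentially the same route as the paper: define $F(X,Y)=X-g(Y)$ (the paper uses the opposite sign, which is immaterial), verify the hypotheses of Theorem~\ref{thm:ops-1}, and read off the inverse $f$ and the derivative formula. The one substantive difference is the choice of $\Delta$: the paper takes $\Delta:=f(\Gamma)$ and concludes it is uniformly open from the homeomorphism, while you take $\Delta:=g^{-1}(\Gamma)\cap B_{\rm nc}(Y^{0},\beta)$; the two sets coincide by the uniqueness clause, and your description makes the uniform-openness of $\Delta$ transparent (preimage of an open nc set under a continuous nc map, intersected with a nc ball), which is arguably a small improvement in clarity over the paper's somewhat terse justification of the same point.
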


We note that in the case where each of the matrices $X^0$ and
$Y^0$ are multiple copies of a single matrix, i.e.,
$X^0=X_*^{(N)}$ and $Y^0=Y_*^{(N)}$ for some $N\in\mathbb{N}$,
using the methods from \cite{AK} one can extend the underlying nc
functions $F$ and $g$ in Theorems \ref{thm:ops-1} and
\ref{thm:ops-2} to nc functions $\widetilde{F}$ and
$\widetilde{g}$ defined on some neighborhood of $(X_*,Y_*)$
(resp., of $Y_*$), even in the case where the domain
$\Omega_{s/N}$ (resp., $\Omega^Y_{s/N}$) is empty, together with
the assumptions of the theorems. Then the conclusions of Theorems
\ref{thm:ops-1} and \ref{thm:ops-2} can be extended accordingly.
We leave the details to the reader.

\subsection{The setting of nilpotent matrices}

Let $\mathcal{M}$ be a module over a unital commutative ring $\mathcal{R}.$
For $n,$ $ \kappa \in \mathbb{N},$ we denote by  $\Nilp(\vecspace{M}; n, \kappa)$  the  set of $n \times n$ matrices $X$ over $\vecspace{M}$ which are \emph{nilpotent of rank at most $\kappa$,} i.e., $X^{\odot \ell} = 0$ for all $\ell \geq \kappa.$ Here $\odot = \odot_{1}.$ The set of all nilpotent $n \times n $ matrices over $\vecspace{M}$ is denoted by $\Nilp(\vecspace{M}; n) = \bigcup^{\infty}_{\kappa = 1} \Nilp(\vecspace{M}; n, \kappa), $ and the set of all nilpotent matrices over $\vecspace{M}$ is denoted by $\Nilp(\vecspace{M}) = \coprod^{\infty}_{n = 1} \Nilp(\vecspace{M}; n).$

For $Y \in \mathcal{M}^{s \times s},$
we denote by $\Nilp(\vecspace{M}, Y; sm, \kappa)$ the set of matrices $X \in \mathcal{M}^{sm \times sm}$ that are \emph{nilpotent about $Y$   of rank at most $\kappa$}, i.e., $\Big (X - Y^{(m)}\Big )^{\odot_{s} \ell} = 0$ for all $\ell \geq \kappa,$  where $s,$ $m,$  and $\kappa \in \mathbb{N}.$  In other words, $X \in \Nilp(\vecspace{M}, Y; sm, \kappa)$ means that  $X - Y^{(m)} \in \Nilp(\vecspace{M}^{s \times s}; m, \kappa).$  Clearly, $\Nilp(\vecspace{M}; n, \kappa) = \Nilp(\vecspace{M}, 0; n, \kappa).$ We will also use the  corresponding notations:
\[\Nilp(\vecspace{M}, Y; sm) =  \bigcup^{\infty}_{\kappa = 1} \Nilp(\vecspace{M}, Y; sm, \kappa)\]
  for the set of   $sm \times sm $ matrices nilpotent about $Y,$   and
\[\Nilp(\vecspace{M}, Y)= \coprod^{\infty}_{m = 1}\Nilp (\vecspace{M}, Y; sm)\]
  for the set of  nilpotent matrices  about $Y.$

Notice that $\Nilp(\vecspace{M}, Y)  \subseteq \vecspace{M}_{\rm nc}$ is a right admissible nc set, and  $\Nilp(\vecspace{M}, Y)_{sm} = \Nilp(\vecspace{M}, Y; sm).$

A nc function on  $\Nilp(\vecspace{M}, Y)$  is a sum of its TT series.
\begin{thma}\cite[Theorem 5.6]{KV-VV}
Let $Y \in \vecspace{M}^{s \times s} $ and  let $f \colon \Nilp(\vecspace{M}, Y) \to \vecspace{N}_{\rm nc}$ be a nc function. Then for all $X \in \Nilp(\vecspace{M}, Y; sm) $
\begin{equation}\label{eq:intro12}
f(X) =   \sum_{\ell = 0}^{\infty}\Big (X - Y^{(m)}\Big )^{\odot_{s} \ell}\Delta_{R}^{\ell}f \underbrace{(Y, \dots, Y)}_{\ell + 1 \ {\rm times}},
\end{equation}
where the sum has finitely many nonzero terms.
\end{thma}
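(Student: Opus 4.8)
The plan is to derive the formula from the Taylor--Taylor expansion (Theorem~\ref{eq:TT}) taken with the single matrix $Y^{(m)}$ as center of expansion, and then to exploit the fact that the higher order nc difference-differential operators respect direct sums in each of their point arguments: first to rewrite the terms of the finite Taylor sum in the $\odot_{s}$-form appearing in the statement, and then to show that the Taylor remainder is identically $0$ once the order of expansion is at least the nilpotency rank of $X$ about $Y$.

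First I would fix $m$ and $X \in \Nilp(\mathcal{M}, Y; sm)$, and choose $\kappa$ with $Z := X - Y^{(m)} \in \Nilp(\mathcal{M}^{s\times s}; m, \kappa)$, i.e.\ $Z^{\odot_{s}\ell} = 0$ for all $\ell \ge \kappa$. Since $\Nilp(\mathcal{M}, Y)$ is a right admissible nc set with $Y^{(m)}, X \in \Nilp(\mathcal{M}, Y)_{sm}$, Theorem~\ref{eq:TT} applies with $s$ there replaced by $sm$, $Y$ there replaced by $Y^{(m)}$, and $N = \kappa - 1$, which gives
\[
f(X) = \sum_{\ell=0}^{\kappa-1}\Delta_{R}^{\ell}f(\underbrace{Y^{(m)}, \ldots, Y^{(m)}}_{\ell+1})(\underbrace{Z, \ldots, Z}_{\ell}) + \Delta_{R}^{\kappa}f(\underbrace{Y^{(m)}, \ldots, Y^{(m)}}_{\kappa}, X)(\underbrace{Z, \ldots, Z}_{\kappa}).
\]

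For the summands I would write $Y^{(m)} = \bigoplus_{i=1}^{m} Y$ in every point argument and invoke \eqref{eq:intro7} together with its higher order analogues (see \cite[Chapter~4]{KV-VV}): because all point arguments of the $\ell$-th term equal $\bigoplus_{i=1}^{m} Y$, the linear map $\Delta_{R}^{\ell}f(Y^{(m)}, \ldots, Y^{(m)})$ decomposes into blocks equal to $\Delta_{R}^{\ell}f(Y, \ldots, Y)$ acting entrywise, which is exactly the content of
\[
\Delta_{R}^{\ell}f(\underbrace{Y^{(m)}, \ldots, Y^{(m)}}_{\ell+1})(\underbrace{Z, \ldots, Z}_{\ell}) = Z^{\odot_{s}\ell}\,\Delta_{R}^{\ell}f(\underbrace{Y, \ldots, Y}_{\ell+1})
\]
in the $\odot_{s}$-notation fixed before Theorem~\ref{thm:intro2}; this term vanishes as soon as $\ell \ge \kappa$. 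For the remainder term I would instead decompose only the first $\kappa$ point arguments as $\bigoplus_{i=1}^{m} Y$, leaving the last point argument $X$ (of size $sm$) intact; applying \eqref{eq:intro7} and its higher order analogues $\kappa$ times and then moving the resulting sum over the internal block indices inside the map, by multilinearity of $\Delta_{R}^{\kappa}f(Y, \ldots, Y, X)$, one finds that each block-row of the remainder is $\Delta_{R}^{\kappa}f(Y,\ldots,Y,X)$ applied (entrywise in its last slot) to the corresponding block-row of $Z^{\odot_{s}\kappa} = (X - Y^{(m)})^{\odot_{s}\kappa}$, which is $0$. Putting these together gives $f(X) = \sum_{\ell=0}^{\kappa-1} Z^{\odot_{s}\ell}\Delta_{R}^{\ell}f(Y,\ldots,Y) = \sum_{\ell=0}^{\infty}(X - Y^{(m)})^{\odot_{s}\ell}\Delta_{R}^{\ell}f(Y,\ldots,Y)$, a sum with at most $\kappa$ nonzero terms, as claimed.

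I expect the main obstacle to be organizational rather than conceptual: the decomposition used for the remainder is asymmetric, since the last point argument $X$ is not a direct sum, so it produces a one-sided block structure and one must track the block-row and block-column indices carefully to recognize the outcome as $\Delta_{R}^{\kappa}f(Y,\ldots,Y,X)$ evaluated on $Z^{\odot_{s}\kappa}$; once that bookkeeping is done the nilpotency $Z^{\odot_{s}\kappa}=0$ closes the argument. As a consistency check of the mechanism, applying Theorem~\ref{eq:TT} at two consecutive orders $N$ and $N+1$ and subtracting already shows that the remainders stabilize for $N \ge \kappa - 1$, since the difference is the term $Z^{\odot_{s}(N+1)}\Delta_{R}^{N+1}f(Y,\ldots,Y) = 0$; but since no topology is available here, the direct computation above is still needed to conclude that the stabilized remainder is actually $0$.
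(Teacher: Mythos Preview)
The paper does not contain its own proof of this statement; it is quoted from \cite[Theorem~5.6]{KV-VV} as background material, so there is nothing in the present paper to compare your argument against.

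That said, your proposal is sound and is the natural way to derive the result from the TT formula already stated in the paper. The treatment of the finite Taylor terms is exactly what is encoded in the $\odot_s$-notation set up before Theorem~\ref{thm:intro2}, so that part needs no further justification. For the remainder, your description ``applied (entrywise in its last slot)'' is a little loose, since the last argument of $\Delta_R^{\kappa}f(Y,\ldots,Y,X)$ lives in $\mathcal{M}^{s\times sm}$, not $\mathcal{M}^{s\times s}$; but the computation you sketch is correct once one unwinds it. After decomposing the first $\kappa$ point arguments, the $i_0$-th block row of the remainder is the multilinear map $\Delta_R^{\kappa}f(Y,\ldots,Y,X)$ applied to
\[
\sum_{i_1,\ldots,i_{\kappa-1}} Z_{i_0,i_1}\otimes\cdots\otimes Z_{i_{\kappa-2},i_{\kappa-1}}\otimes Z_{i_{\kappa-1},\cdot}
\;\in\;(\mathcal{M}^{s\times s})^{\otimes(\kappa-1)}\otimes\mathcal{M}^{s\times sm}.
\]
Using the $\mathcal{R}$-module identification $\mathcal{M}^{s\times sm}\cong\bigoplus_{j=1}^m\mathcal{M}^{s\times s}$ (block columns) and the distributivity of $\otimes$ over finite direct sums, the $j$-th component of this element is exactly $(Z^{\odot_s\kappa})_{i_0,j}=0$; hence the whole tensor vanishes and so does the remainder. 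It would strengthen the write-up to make this identification explicit rather than leaving it in the phrase ``entrywise in its last slot.''
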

It follows  from  \cite[Theorem 5.6]{KV-VV} that $f(X) \in \Nilp(\vecspace{N}, f(Y))$ for $X \in \Nilp(\vecspace{M}, Y).$

Our main results in the setting of nilpotent matrices are the following two theorems.

\begin{thma} [Implicit nc function theorem] \label{thm:nilp1}
Let $\mathcal{M}, $ $\mathcal{N},$ and $\mathcal{O}$ be modules over a commutative ring $\mathcal{R},$
and $X^{0} \in \mathcal{M}^{s \times s}$, $ Y^{0} \in \mathcal{N}^{s \times s}$ for $s \in \mathbb{N}.$
Let $F\colon \Nilp(\mathcal{M} \times \mathcal{N}, (X^{0}, Y^{0}))  \rightarrow {\mathcal{O}}_{\rm nc}$ be a nc function satisfying the following conditions:
\begin{enumerate}
\item [1.]$F(X^{0}, Y^{0}) = 0$.
\item [2.]
$
\Delta^{Y}_{R}F((X^{0}, Y^{0}), (X^{0}, Y^{0})) \in \Hom(\mathcal{N}^{s \times s}, \mathcal{O}^{s \times s})$  is invertible. Here
\[\Delta^{Y}_{R}F((X^{0}, Y^{0}), (X^{0}, Y^{0}))(Z) \mathrel {\mathop:}= \Delta_{R}F((X^{0}, Y^{0}), (X^{0}, Y^{0}))(0, Z).\]
\end{enumerate}
Then:
\begin{enumerate}
\item [I.] There exists  a nc function $f\colon \Nilp(\mathcal{M}, X^{0}) \rightarrow \Nilp(\mathcal{N}, Y^{0}) \subset \mathcal{N}_{\rm nc}$
such that
\begin{enumerate}
\item $ X \in \Nilp(\mathcal{M}, X^{0})  \iff (X, f(X)) \in \Nilp(\mathcal{M} \times \mathcal{N}, (X^{0}, Y^{0})) .$
\item $F(X, Y) = 0 \iff  Y = f(X)$.
\end{enumerate}
\item[II.] For any $ X \in \Nilp(\mathcal{M}, X^{0}),$ the linear mapping  $\Delta_{R}^{Y}F((X, f(X)), (X,f(X)))$ is invertible, and
\begin{multline*}
\Delta_{R}f(X, X)(Z)= -\Big(\Delta_{R}^{Y}F((X, f(X)), (X,f(X)))\Big)^{-1}\\
\cdot\Delta_{R}^{X}F\Big((X, f(X)), (X,f(X))\Big)(Z),\end{multline*}
where
$\Delta^{X}_{R}F((X, Y), (X, Y))(Z) \mathrel {\mathop:}= \Delta_{R}F((X, Y), (X, Y))(Z, 0).$
\end{enumerate}
\end{thma}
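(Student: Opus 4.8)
The plan is to construct $f$ from its Taylor--Taylor coefficients: read off a recursion for them from the equation $F(X,f(X))=0$, solve it using condition~2, and then recover the remaining assertions from the nilpotent TT calculus of \cite[Chapter~5]{KV-VV} — every nc function on a set of matrices nilpotent about a fixed point is the sum of its finitely supported TT series, and conversely every sequence of multilinear forms over $\mathcal{M}^{s\times s}$ arises as the coefficient sequence of a (unique) such function. Put $P:=(X^{0},Y^{0})$ and $F_{r}:=\Delta_{R}^{r}F(P,\dots,P)$. A solution $f$ is determined by $\phi_{\ell}:=\Delta_{R}^{\ell}f(X^{0},\dots,X^{0})$, and $\phi_{0}=f(X^{0})$ is forced to be $Y^{0}$ by condition~1. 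Writing $\Phi:=(\mathrm{id},f)$ and $H:=F\circ\Phi$, and using the description of $\Delta_{R}^{\ell}$ via evaluation on block bidiagonal matrices together with the expansion of a nc function on a block upper triangular matrix in terms of its $\Delta_{R}^{r}$'s (\cite[Chapter~3]{KV-VV}), one gets, for $\ell\ge 1$,
\[
H_{\ell}(Z^{1},\dots,Z^{\ell})=\Delta_{R}^{Y}F(P,P)\bigl(\phi_{\ell}(Z^{1},\dots,Z^{\ell})\bigr)+R_{\ell}(Z^{1},\dots,Z^{\ell}),
\]
where $R_{\ell}$ is an explicit (Fa\`a di Bruno type) expression in $F_{1},\dots,F_{\ell}$ and $\phi_{0},\dots,\phi_{\ell-1}$ only, with $R_{1}=\Delta_{R}^{X}F(P,P)$. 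Since $\Delta_{R}^{Y}F(P,P)$ is an invertible element of $\Hom(\mathcal{N}^{s\times s},\mathcal{O}^{s\times s})$, demanding $H_{\ell}=0$ for all $\ell$ forces $\phi_{\ell}=-\bigl(\Delta_{R}^{Y}F(P,P)\bigr)^{-1}\circ R_{\ell}$; this recursion has a unique solution, and I take $f$ to be the nc function on $\Nilp(\mathcal{M},X^{0})$ with TT coefficients $(\phi_{\ell})$.

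Next I verify I(a) and that $f$ solves the equation. For any nc function $g$ on $\Nilp(\mathcal{M},X^{0})$ with $g(X^{0})=Y^{0}$ we have $g(X)-Y^{0(m)}=\sum_{\ell\ge 1}(X-X^{0(m)})^{\odot_{s}\ell}\,\Delta_{R}^{\ell}g(X^{0},\dots,X^{0})$, so, identifying $m\times m$ arrays of $s\times s$ blocks with digraphs on $m$ vertices, the support digraph of $g(X)-Y^{0(m)}$ lies in the transitive closure of that of $X-X^{0(m)}$. Hence if $X-X^{0(m)}$ is nilpotent (acyclic support), the pair $\bigl(X-X^{0(m)},g(X)-Y^{0(m)}\bigr)$ has acyclic support, i.e.\ $(X,g(X))\in\Nilp(\mathcal{M}\times\mathcal{N},P)$; for $g=f$ this is the forward implication of I(a), and the converse follows by projecting the support onto the first coordinate. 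Applied to block bidiagonal arguments this also shows $\Phi=(\mathrm{id},f)$ maps into the domain of $F$, so $H=F\circ\Phi$ is a genuine nc function on $\Nilp(\mathcal{M},X^{0})$; by construction all its TT coefficients vanish, whence $H\equiv 0$, i.e.\ $F(X,f(X))=0$ for all $X$. That $f(X)\in\Nilp(\mathcal{N},Y^{0})$ is the remark following \cite[Theorem~5.6]{KV-VV}, as $f(X^{0})=Y^{0}$. Together with uniqueness of the $\phi_{\ell}$ this gives I(b)$\Leftarrow$ and uniqueness of $f$ among nc functions.

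For I(b)$\Rightarrow$ and for II the crux is invertibility of $\Delta_{R}F$ evaluated at a point differing from $P^{(m)}$ by a nilpotent perturbation. Fix $X\in\Nilp(\mathcal{M},X^{0};sm)$. After conjugating by a permutation matrix (permissible, since nc functions respect similarities), we may assume $A:=X-X^{0(m)}$ is strictly block upper triangular; then so is its transitive closure, hence so is $f(X)-Y^{0(m)}$. Now $\Delta_{R}^{Y}F((X,f(X)),(X,f(X)))(Z)$ is the $(1,2)$ block of $F$ evaluated at $\bigl(\begin{smallmatrix}X&0\\0&X\end{smallmatrix}\bigr)$ and $\bigl(\begin{smallmatrix}f(X)&Z\\0&f(X)\end{smallmatrix}\bigr)$, and because $Z$ occupies the off-diagonal block, the relevant $2sm\times 2sm$ matrix equals $P^{(2m)}$ plus a \emph{strictly} block upper triangular perturbation, hence lies in $\Nilp(\mathcal{M}\times\mathcal{N},P)$. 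Expanding $F$ there in its TT series about $P^{(2m)}$ and reading off the $(1,2)$ block, the order-$1$ term contributes exactly $\bigl(\Delta_{R}^{Y}F(P,P)\bigr)^{(m)}(Z)$, while every higher-order term carries at least one factor drawn from the nilpotent blocks $A$ and $f(X)-Y^{0(m)}$. A count of indices along the admissible paths shows that the operator $\mathcal{E}$ formed by these higher terms sends matrices supported on block super-diagonals $\ge t$ to matrices supported on block super-diagonals $\ge t+1$; as every element of $\mathcal{N}^{sm\times sm}$ is supported on super-diagonals $\ge-(m-1)$, this forces $\mathcal{E}^{2m-1}=0$. Since $\bigl(\Delta_{R}^{Y}F(P,P)\bigr)^{-1,(m)}$ acts block-entrywise and so preserves the super-diagonal grading, $\bigl(\Delta_{R}^{Y}F(P,P)\bigr)^{-1,(m)}\mathcal{E}$ is again nilpotent, so $\Delta_{R}^{Y}F((X,f(X)),(X,f(X)))=\bigl(\Delta_{R}^{Y}F(P,P)\bigr)^{(m)}\bigl(\mathrm{id}+\bigl(\Delta_{R}^{Y}F(P,P)\bigr)^{-1,(m)}\mathcal{E}\bigr)$ is invertible by a terminating Neumann series. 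The same computation with $f(X)$ in the lower-right block replaced by any $Y$ with $(X,Y)\in\Nilp(\mathcal{M}\times\mathcal{N},P)$ (now $A$, $Y-Y^{0(m)}$ and $f(X)-Y^{0(m)}$ are simultaneously strictly block upper triangular) shows $\Delta_{R}F((X,Y),(X,f(X)))((0,\cdot))$ is invertible; combined with $\Delta_{R}F((X,Y),(X,f(X)))\bigl((0,Y-f(X))\bigr)=F(X,Y)-F(X,f(X))=0$ from \eqref{eq:intro6}, this yields $Y=f(X)$, completing I(b). Finally, applying $\Delta_{R}$ to $F(X',f(X'))\equiv 0$, the chain rule and $\Delta_{R}(\mathrm{id})(X,X)(Z)=Z$ give $\Delta_{R}^{X}F((X,f(X)),(X,f(X)))(Z)+\Delta_{R}^{Y}F((X,f(X)),(X,f(X)))\bigl(\Delta_{R}f(X,X)(Z)\bigr)=0$, and inverting the second operator yields the formula in II.

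The step I expect to be the main obstacle is this invertibility claim: showing that the higher-order correction $\mathcal{E}$ to the $Y$-derivative at a general point $(X,f(X))$ is an honestly nilpotent operator. One cannot simply invoke triangularity, since a nc function does not send block Hessenberg matrices (block upper triangular together with one sub-diagonal) to block Hessenberg matrices, so $\mathcal{E}$ does not preserve the block super-diagonal filtration; the content is the careful path-counting showing that one application of $\mathcal{E}$ raises by one the lowest block super-diagonal on which its output may be supported. The remaining ingredients — the chain rule and higher-order expansion formulas for $\Delta_{R}$, the nilpotent-support bookkeeping, and the correspondence between nc functions on $\Nilp(\mathcal{M},X^{0})$ and coefficient sequences — are either established in \cite[Chapters~3 and~5]{KV-VV} or straightforward.
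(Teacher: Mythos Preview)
Your overall route differs from the paper's: you build $f$ directly from its TT coefficients via a Fa\`a di Bruno recursion, whereas the paper runs a Newton-type iteration $Y^{[k+1]}=Y^{[k]}-(\Delta_R^YF(P,P))^{-1(m)}F(X,Y^{[k]})$ starting at $Y^{0(m)}$ and shows it stabilises in finitely many steps because $Y^{[k+1]}-Y^{[k]}$ begins at $\odot_s$-order $k+1$ in $X-X^{0(m)}$. Both strategies are reasonable, and yours has the pleasant feature of producing the TT coefficients of $f$ explicitly.

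The gap is exactly where you put your finger. Your argument for I(a) and for the invertibility of $\Delta_R^YF$ at a general nilpotent point both rest on the claim that a $\odot_s$-nilpotent block matrix can be permuted to strictly block upper triangular form --- equivalently, that ``nilpotent'' means ``acyclic block-support digraph''. This is false: with $\mathcal{R}=\mathcal{M}=\mathbb{R}$, $s=1$, $m=2$, the matrix $A=\bigl(\begin{smallmatrix}1&-1\\1&-1\end{smallmatrix}\bigr)$ has $A^{\odot 2}=A^{2}=0$ but full support, and no permutation makes it strictly upper triangular. Over a field one might try a general similarity instead, but the theorem is stated over an arbitrary commutative ring $\mathcal{R}$, where nilpotent matrices need not be triangularizable at all, and in any case your super-diagonal path-count does not survive a non-permutation change of basis. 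So the step you flagged as the obstacle is not merely hard but wrong as written; both your proof of I(a) and your proof that $\mathcal{E}$ is nilpotent collapse with it.

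The paper avoids any triangularisation. For I(a) it checks directly, from the TT formula and the explicit form of the Newton map $g_X$, that $g_X(Y)$ stays jointly nilpotent with $X$ whenever $Y$ does: $(X-X^{0(m)},g_X(Y)-Y^{0(m)})^{\odot_s\kappa'}$ expands as a sum of words of length $\ge\kappa'$ in the jointly nilpotent pair $(X-X^{0(m)},Y-Y^{0(m)})$, hence vanishes. For invertibility it proves a free-standing lemma, valid for \emph{any} two points $(X^1,Y^1),(X^2,Y^2)\in\Nilp(\mathcal{M}\times\mathcal{N},P)$: the TT expansion of $F$ on the relevant $2\times 2$ block matrix gives $\Delta_R^YF((X^1,Y^1),(X^2,Y^2))=(\Delta_R^YF(P,P))^{(m)}(\mathrm{id}+N)$ with $N$ a nilpotent operator, every summand of $N(Z)$ carrying at least one honest $\odot_s$-factor of the nilpotent element $(X^2-X^{0(m)},Y^2-Y^{0(m)})$ on the right of $Z$. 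That lemma then yields both uniqueness in I(b) (via \eqref{eq:intro6}) and, via \eqref{eq:intro5}, the intertwining property of $f$; neither step needs any support bookkeeping.
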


\begin{thma} [Inverse nc function theorem]\label{thm:nilp3}
Let $\mathcal{M},$ $ \mathcal{N}$  be modules over a unital commutative ring $\mathcal{R},$
and $Y^{0} \in \mathcal{N}^{s \times s}.$
 Let $g\colon \Nilp(\mathcal{N}, Y^{0})  \rightarrow {\mathcal{M}}_{\rm nc}$ be a nc function such that
$
\Delta_{R}g(Y^{0}, Y^{0})$  is invertible.
Then:
\begin{enumerate}
\item [I.] The mapping  $ g\colon \Nilp(\mathcal{N}, Y^{0}) \rightarrow \mathcal{M}_{\rm nc}$ is one-to-one and $g( \Nilp(\mathcal{N}, Y^{0})) = \Nilp(\mathcal{M}, X^{0}).$
\item[II.] The inverse of $g,$ $ f\mathrel {\mathop:}= g^{-1},$ is a nc function mapping $\Nilp(\mathcal{M}, X^{0}))$  onto $\Nilp(\mathcal{N}, Y^{0}).$
\item[III.] $\Delta_{R}g(f(X), f(X))$ is invertible for every $X\in \Nilp(\mathcal{M}, X^{0}),$ and
\[
\Delta_{R}f(X, X) = \Big(\Delta_{R}g(f(X), f(X))\Big)^{-1}.
\]
\end{enumerate}
\end{thma}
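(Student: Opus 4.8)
The plan is to derive this theorem from the implicit nc function theorem, Theorem~\ref{thm:nilp1}, in the classical manner. Set $X^{0}:=g(Y^{0})$ and consider $F(X,Y):=X-g(Y)$, a map into $\mathcal{M}_{\rm nc}$ (so that $\mathcal{M}$ plays the role of $\mathcal{O}$ in Theorem~\ref{thm:nilp1}). First I would check that $F$ is a bona fide nc function on $\Nilp(\mathcal{M}\times\mathcal{N},(X^{0},Y^{0}))$: the coordinate projections $\pi_{\mathcal{M}}\colon(X,Y)\mapsto X$ and $\pi_{\mathcal{N}}\colon(X,Y)\mapsto Y$ are nc functions, since scalar conjugation of a pair acts coordinatewise; moreover $\pi_{\mathcal{N}}$ carries $\Nilp(\mathcal{M}\times\mathcal{N},(X^{0},Y^{0}))$ into $\Nilp(\mathcal{N},Y^{0})$, because a nc function sends a set of matrices nilpotent about a point into the set nilpotent about the image of that point (the quoted consequence of \cite[Theorem~5.6]{KV-VV}). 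Hence $g\circ\pi_{\mathcal{N}}$ is a nc function valued in $\mathcal{M}_{\rm nc}$, and $F=\pi_{\mathcal{M}}-g\circ\pi_{\mathcal{N}}$ is one as well. Evaluating $F$ on $2\times 2$ block upper triangular matrices and reading off the $(1,2)$ block, one finds $\Delta_{R}^{X}F((X,Y),(X,Y))(Z)=Z$ and $\Delta_{R}^{Y}F((X,Y),(X,Y))(Z)=-\Delta_{R}g(Y,Y)(Z)$ for every $(X,Y)$ in the domain.

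Next I would verify the hypotheses of Theorem~\ref{thm:nilp1}. Condition~1 is immediate: $F(X^{0},Y^{0})=g(Y^{0})-g(Y^{0})=0$. Condition~2 holds because $\Delta_{R}^{Y}F((X^{0},Y^{0}),(X^{0},Y^{0}))=-\Delta_{R}g(Y^{0},Y^{0})$ is invertible, being a scalar multiple of the invertible operator $\Delta_{R}g(Y^{0},Y^{0})$. Theorem~\ref{thm:nilp1} then furnishes a nc function $f\colon\Nilp(\mathcal{M},X^{0})\to\Nilp(\mathcal{N},Y^{0})$ with $X\in\Nilp(\mathcal{M},X^{0})\iff(X,f(X))\in\Nilp(\mathcal{M}\times\mathcal{N},(X^{0},Y^{0}))$ and, for $(X,Y)$ in the domain of $F$, the equivalence $X-g(Y)=0\iff Y=f(X)$.

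From this equivalence I would extract $g\circ f=\mathrm{id}$ on $\Nilp(\mathcal{M},X^{0})$ and $f\circ g=\mathrm{id}$ on $\Nilp(\mathcal{N},Y^{0})$. For the first: given $X\in\Nilp(\mathcal{M},X^{0})$, the pair $(X,f(X))$ lies in the domain of $F$ and $Y=f(X)$ holds, so $F(X,f(X))=0$, i.e.\ $g(f(X))=X$. For the second: given $Y\in\Nilp(\mathcal{N},Y^{0})$, the nc map $Y\mapsto(g(Y),Y)$ sends $\Nilp(\mathcal{N},Y^{0})$ into $\Nilp(\mathcal{M}\times\mathcal{N},(g(Y^{0}),Y^{0}))=\Nilp(\mathcal{M}\times\mathcal{N},(X^{0},Y^{0}))$, again by the quoted fact, so $(g(Y),Y)$ is in the domain of $F$, $F(g(Y),Y)=0$, and the equivalence gives $Y=f(g(Y))$. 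Together with the inclusion $g(\Nilp(\mathcal{N},Y^{0}))\subseteq\Nilp(\mathcal{M},X^{0})$, these identities show that $g$ maps $\Nilp(\mathcal{N},Y^{0})$ bijectively onto $\Nilp(\mathcal{M},X^{0})$ with two-sided inverse the nc function $f$, which is conclusions I and II. Conclusion III is the translation of Theorem~\ref{thm:nilp1}(II): there $\Delta_{R}^{Y}F((X,f(X)),(X,f(X)))=-\Delta_{R}g(f(X),f(X))$ is asserted invertible, hence so is $\Delta_{R}g(f(X),f(X))$, and since $\Delta_{R}^{X}F((X,f(X)),(X,f(X)))$ is the identity, the formula there becomes $\Delta_{R}f(X,X)(Z)=-\bigl(-\Delta_{R}g(f(X),f(X))\bigr)^{-1}(Z)=\bigl(\Delta_{R}g(f(X),f(X))\bigr)^{-1}(Z)$.

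Once Theorem~\ref{thm:nilp1} is in hand this argument is essentially bookkeeping, so there is no deep obstacle; the one point demanding care is keeping track of which $\Nilp$-set each matrix and each composite nc function inhabits --- in particular checking that $g\circ\pi_{\mathcal{N}}$, the map $Y\mapsto(g(Y),Y)$, and $f$ itself are defined on, and take values in, the correct sets. All of these reductions rest on the cited consequence of \cite[Theorem~5.6]{KV-VV} that nc functions preserve nilpotency about a base point.
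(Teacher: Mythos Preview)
Your proof is correct and follows the same overall strategy as the paper: set $X^{0}=g(Y^{0})$, apply Theorem~\ref{thm:nilp1} to $F(X,Y)=X-g(Y)$ (the paper uses the opposite sign $g(Y)-X$, which is immaterial), and read off the conclusions. The one substantive variation is in how you obtain $f\circ g=\mathrm{id}$. The paper first proves injectivity of $g$ directly---using the finite difference formula $g(Y^{1})-g(Y^{2})=\Delta_{R}g(Y^{2},Y^{1})(Y^{1}-Y^{2})$ together with Lemma~\ref{thm:nilp2}---and then deduces $Y=f(g(Y))$ from $g(f(g(Y)))=g(Y)$. You instead observe that the nc map $Y\mapsto(g(Y),Y)$ sends $\Nilp(\mathcal{N},Y^{0})$ into $\Nilp(\mathcal{M}\times\mathcal{N},(X^{0},Y^{0}))$, so $(g(Y),Y)$ lies in the domain of $F$ and the uniqueness clause of Theorem~\ref{thm:nilp1} applies directly. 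Your route is marginally cleaner in that it does not invoke Lemma~\ref{thm:nilp2} a second time outside the proof of Theorem~\ref{thm:nilp1}; the paper's route has the minor advantage of establishing injectivity of $g$ as a standalone fact before identifying the inverse.
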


\subsection{Remarks}

 Similar theorems have been proved by Pascoe \cite{Pascoe2} and by Agler and McCarthy \cite{Ag-Mc2}.
In \cite{Pascoe2}, an inverse function theorem has been
established for a nc function $g$ on a general nc set $\Omega$
closed under similarities under the assumption that $\Delta_Rg$ is
invertible \emph{everywhere} on $\Omega$. However, the conclusions
are also global. The paper also includes an interesting free
version of partial results on the Jacobi conjecture. In
\cite{Ag-Mc2}, implicit/inverse function theorems similar to our
Theorems \ref{thm:ops-1} and \ref{thm:ops-2} have been proved in
the setting of the \emph{finite-dimensional operator space}
$\mathbb{C}^d$, and the proof uses an essentially
finite-dimensional argument. However, the payoff is some
additional insights in more concrete situations, e.g., an elegant
result that in some generic sense two matrices satisfying a
polynomial equation must commute. Finally, other versions of
implicit/inverse function theorems in different nc settings appear
in \cite{JLTaylor-nc} and \cite{Voic2}.

\section {Proofs of the main results}\label{sec:proof1}

\begin{proof}[Proof of Theorem \ref{thm:ops-1}]
Let
\begin{equation}
\label{eq:family}
g_X(Y) \mathrel {\mathop:} = Y - \Big(\delta^{Y}F(X^{0}, Y^{0}\Big)^{-1 (m)} F(X, Y),  \quad m \in \mathbb{N}, \, (X, Y) \in \Omega_{sm}.
\end{equation}

By  \cite[Corollary 7.28]{KV-VV} there exists $\delta > 0$ such that $F$ is uniformly analytic on
$B_{\rm nc}((X^{0}, Y^{0}), \delta)$. It is obvious that for a fixed $m,$ and  $X \in B_{\rm nc}(X^{0}, \delta)_{ sm}$, the function  $g_{X} $ is defined for every $Y \in B_{\rm nc}(Y^{0}, \delta)_{ sm}$. Here
\[ B_{\rm nc}((X^{0},Y^{0}), \delta)_{sm} = B_{\rm nc}(X^{0}, \delta)_{sm} \times  B(Y^{0}, \delta)_{ sm}.\]

Clearly, $Y_{X}$ is a fixed point of $g_{X}$ if and only if $F(X, Y_{X}) = 0$.

We will show that  there exists a positive number $ \gamma < \delta$ such that for any $X \in B_{\rm nc}(X^{0}, \gamma)$ the mapping $g_{X}$  of the ball $B_{\rm nc}(Y^{0}, \gamma)_{sm_{X}}$ into $\mathcal{Y}^{sm_{X} \times sm_{X}}$ is contractive with the coefficient not exceeding   $\frac{1}{2}$ (here $m_{X} $ is the size of $X).$
 By \cite[Theorem 7.51,  Theorem 7.53]{KV-VV}, and the paragraph following the proof of \cite[Theorem 7.53]{KV-VV}, we have that $\delta^{Y}F$ is continuous on  $ B_{\rm nc}((X^{0},Y^{0}),  \delta)$ and that $\delta^{Y}F(X,Y) \in \mathcal{L}_{\rm cb}\left (\mathcal{Y}^{sm_{(X, Y)} \times sm_{(X, Y)}}, \mathcal{Z}^{sm_{(X, Y)} \times sm_{(X, Y)}}\right )$ for every $(X, Y)$ $\in B_{\rm nc}((X^{0},Y^{0}),  \delta)$.
Next, let $m \in \mathbb{N},$ $X \in B_{\rm nc}(X^{0}, \delta).$ Then for every   $Y \in B_{\rm nc}(Y^{0}, \delta)$  we have:
\[\delta g_X(Y) = I_{Y} - \Big(\delta^{Y}F(X^{0}, Y^{0})\Big)^{-1 (m)} \delta^{Y} F(X, Y).\]
We rewrite the last expression as
\[\delta g_X(Y) = \Big(\delta^{Y}F(X^{0}, Y^{0})\Big)^{-1 (m)}\Big(\delta^{Y}F(X^{0}, Y^{0})^{ (m)} -  \delta^{Y} F(X, Y)\Big).\]
We have

\begin{equation*}
\left\| \Big(\delta^{Y}F(X^{0}, Y^{0})\Big)^{-1}\right\|_ {
\mathcal{L}_{\rm cb}(\mathcal{Z}^{s \times s}, \mathcal{Y}^{s
\times s})}  =
 \sup_{k \in \mathbb{N}}\left\| \Big(\delta^{Y}F(X^{0}, Y^{0})\Big)^{-1 (k)}\right\|_ { \mathcal{L}(\mathcal{Z}^{sk \times sk}, \mathcal{Y}^{sk \times sk})}.
\end{equation*}
Hence condition $\Big(\delta^{Y}F(X^{0}, Y^{0})\Big)^{-1} \in
\mathcal{L}_{\rm cb}(\mathcal{Z}^{s \times s}, \mathcal{Y}^{s
\times s})$  implies
\begin{equation}\label{eq:est1}
\left\| \Big(\delta^{Y}F(X^{0}, Y^{0})\Big)^{-1 (m)}\right\|_ {
\mathcal{L}(\mathcal{Z}^{sm \times sm}, \mathcal{Y}^{sm \times
sm})}  \leq M,
   \end{equation}
where $M \mathrel {\mathop:} = \left\| \Big(\delta^{Y}F(X^{0},
Y^{0})\Big)^{-1 }\right\|_ { \mathcal{L}_{\rm cb}(\mathcal{Z}^{s
\times s}, \mathcal{Y}^{s \times s})} < \infty$. Second, we
estimate
\begin{multline*}
\left\|\delta g_{X}(Y)\right\|_ { \mathcal{L}(\mathcal{Y}^{sm
\times sm})} \leq
 \left\| \Big(\delta^{Y}F(X^{0}, Y^{0})\Big)^{-1 (m)}\right\|_ { \mathcal{L}(\mathcal{Z}^{sm \times sm}, \mathcal{Y}^{sm \times sm})} \\  \times \left\|\delta^{Y}F(X^{0}, Y^{0})^{(m)} - \delta^{Y}F(X, Y) \right\|_ { \mathcal{L}(\mathcal{Y}^{sm \times sm}, \mathcal{Z}^{sm \times sm})} \\
\leq M\left\|\delta^{Y}F(X^{0}, Y^{0})^{(m)} - \delta^{Y}F(X, Y)
\right\|_ { \mathcal{L}(\mathcal{Y}^{sm \times sm},
\mathcal{Z}^{sm \times sm})}.
\end{multline*}
We notice that
\[\delta^{Y}F(X^{0}, Y^{0})^{ (m)} = \delta^{Y}F \Big ( X^{0(m)}, Y^{0(m)} \Big),\]
and \[\Big(\delta^{Y}F(X^{0}, Y^{0})\Big)^{-1 (m)} = \Big (\delta^{Y}F(X^{0(m)}, Y^{0(m)}) \Big)^{-1}.\]
Since $\delta^{Y}F$ is continuous at $(X^{0}, Y^{0})$, there exists $\gamma$, $0 < \gamma < \delta,$ such that for any $(X, Y) \in B_{\rm nc}((X^{0}, Y^{0}), \gamma),$ we have
\begin{multline}\label{eq:est2}
\left\|\delta^{Y}F(X^{0}, Y^{0})^{(m_{(X, Y)})} - \delta^{Y}F(X,
Y) \right\|_ { \mathcal{L}(\mathcal{Y}^{sm_{(X, Y)} \times sm_{(X,
Y)}}, \mathcal{Z}^{sm_{(X, Y)} \times sm_{(X, Y)}})} \\ \leq
\frac{1}{2M}.
\end{multline}
Now using (\ref{eq:est1}) and (\ref{eq:est2}), we  obtain
\begin{equation}\label{eq:est3}
\left\|\delta g_{X}(Y)\right\|_ { \mathcal{L}(\mathcal{Y}^{sm_{(X,
Y)} \times sm_{(X, Y)}})} \leq M \frac{1}{2M} = \frac{1}{2}
\end{equation}
for any $(X, Y) \in B_{\rm nc}((X^{0}, Y^{0}), \gamma). $   From now on, we assume that
 \[(X, Y) \in B_{\rm nc}((X^{0}, Y^{0}), \gamma), \]
 so that  (\ref{eq:est3}) holds.

 Next, using (\ref{eq:est3}) and the mean value theorem for functions in Banach spaces \cite{Kant-Ak}, for any  $ m \in \mathbb{N}, X \in B_{\rm nc}(X^{0}, \gamma)_{sm}$ and any $Y_{1}, Y_{2} \in B_{\rm nc}(Y^{0}, \gamma)_{sm}$, we have

\begin{multline*}
\left\| g_{X}(Y_{1}) - g_{X}(Y_{2})\right\|_{sm} \\
\leq \sup_{0 \leq t \leq 1} \left\| \delta g_{X}(Y_{1} + t(Y_{1} - Y_{2})) \right\|_ { \mathcal{L}(\mathcal{Y}^{sm \times sm})}\left\| Y_{1} - Y_{2}\right\|_{sm}\\
 \leq\sup_{Y \in B_{\rm nc}(Y^{0}, \gamma)_{ sm}} \left\| \delta g_{X}(Y)\right\|_ { \mathcal{L}(\mathcal{Y}^{sm \times sm})}\left\| Y_{1} - Y_{2}\right\|_{ sm}.
\end{multline*}
Thus, by (\ref{eq:est3}), we derive
\begin{equation}\label{eq:est4}
\left\| g_{X}(Y_{1}) - g_{X}(Y_{2})\right\|_{sm} \leq  \frac{1}{2}\left\| Y_{1} - Y_{2}\right\|_{sm}.
\end{equation}
Here we used the convexity of  $B_{\rm nc}(Y^{0}, \gamma)_{sm}$,  so that the segment $(Y_{1}, Y_{2})$ lies in  $B_{\rm nc}(Y^{0}, \gamma)_{sm}$. Hence the mapping $g_{X}$ of the ball  $B_{\rm nc}(Y^{0}, \gamma)_{sm}$ into $\mathcal{Y}^{sm \times sm}$ is contractive with the coefficient $\frac{1}{2}$ for any $X \in B_{\rm nc}(X^{0}, \gamma)_{ sm}.$ However, $g_{X}$ may fail to map $ B_{\rm nc}(Y^{0}, \gamma)_{ sm}$ into itself.

The next step is to find a subset of $B_{\rm nc}(Y^{0},\gamma )_{sm}$  that is a complete metric space mapped by $g_{X}$ into itself for an appropriate choice of $X$.  In fact, we will show that for any $\beta,$ $ 0 < \beta <\gamma,$ there exists a positive $\alpha < $ $\gamma$ such that for any $X \in B_{\rm nc}(X^{0}, \alpha)$, $g_{X} $ maps the closed ball $\closure{B}_{\rm nc}(Y^{0}, \beta)_{sm_{X}}$ into itself. The ball $\closure{B}_{\rm nc}(Y^{0}, \beta)_{sm_{X}}$ as a closed subset of a Banach  space  $\mathcal{Y}^{sm_{X}\times sm_{X}}$ is complete. By assumption 1 of the theorem,  $F(X^{0(m_{X})}, Y^{0(m_{X})}) = $ $F(X^{0}, Y^{0})^{(m_{X})} = 0$, so we can write
\begin{multline}\label{eq:est5}
g_{X}\Big(Y^{0(m_{X})}\Big)\\ = Y^{0(m_{X})} -
\Big(\delta^{Y}F(X^{0(m_{X})}, Y^{0(m_{X})})\Big)^{-1} \Big( F(X,
Y^{0(m_{X})}) - F(X^{0(m_{X})}, Y^{0((m_{X})}) \Big).
\end{multline}
Then
\begin{multline}\label{eq:est6}
\left\|g_{X}(Y) - Y^{0(m_{X})}\right\|_{sm_{X}} \\ \leq \left\| g_{X}(Y) -  g_{X}(Y^{0(m_{X})})\right\|_{sm_{X}} + \left\|g_{X}(Y^{0(m_{X})}) - Y^{0(m_{X})}\right\|_{sm_{X}}.
\end{multline}
By (\ref{eq:est4}),
\begin{equation}\label{eq:est7}
 \left\| g_{X}(Y) -  g_{X}(Y^{0(m_{X})})\right\|_{sm_{X}} \leq \frac{1}{2}\left\| Y - Y ^{0(m_{X})}\right\|_{sm_{X}}.
\end{equation}
 By (\ref{eq:est5}), we can write
\begin{multline}\label{eq:est8}
\left\|g_{X}(Y^{0(m_{X})}) - Y^{0(m_{X})}\right\|_{sm_{X}} \\ \leq
\left\| \Big(\delta^{Y}F(X^{0(m_{X})},
Y^{0(m_{X})})\Big)^{-1}\right\|_{\mathcal{L}(\mathcal{Z}^{sm_{X}
\times sm_{X}}, \mathcal{Y}^{sm_{X} \times sm_{X}})} \\ \times
\left\| F(X, Y^{0(m_{X})}) - F(X^{0(m_{X})},
Y^{0(m_{X})})\right\|_{sm_{X}}.
\end{multline}
By assumption {2} of the theorem, $F$ is continuous at $(X^{0}, Y^{0})$.That is, for any positive  $\beta < \gamma$, there exists a positive $\alpha < \gamma$ such that for any $X \in B_{\rm nc}(X^{0}, \alpha)$ one has
\begin{equation}\label{eq:est9}
\left\| F(X, Y^{0(m_{X})}) - F(X^{0(m_{X})}, Y^{0(m_{X})})\right\|_{sm_{X}}  < \frac{\beta}{2M}.
\end{equation}
Then by (\ref{eq:est1}), (\ref{eq:est8}) and (\ref{eq:est9}),
\begin{equation}\label{eq:est10}
\left\|g_{X}(Y^{0(m_{X})}) - Y^{0(m_{X})}\right\|_{sm_{X}} \leq \frac{1}{2}\beta.
\end{equation}
Thus if $X \in B_{\rm nc}(X^{0}, \alpha)_{sm_{X}}$ and $Y \in  \closure{B}_{\rm nc}(Y^{0}, \beta)_{sm_{X}}$, it follows from (\ref{eq:est6}), (\ref{eq:est7}), and (\ref{eq:est10}) that
\begin{equation*}
\left\|g_{X}(Y) - Y^{0(m_{X})}\right\|_{sm_{X}} < \frac{1}{2}\left\|Y -Y^{0(m_{X})}\right\|_{sm_{X}} + \frac{1}{2}\beta \leq\frac{1}{2}\beta +\frac{1}{2}\beta = \beta .
\end{equation*}
Therefore
\begin{equation}\label{eq:est11}
 g_{X}( \closure{B}_{\rm nc}(Y^{0}, \beta)_{sm_{X}}) \subset \closure{B}_{\rm nc}(Y^{0}, \beta)_{sm_{X}}.
\end{equation}
 In fact, we obtained that \[ g_{X}( \closure{B}_{\rm nc}(Y^{0}, \beta)_{sm_{X}}) \subset B_{\rm nc}(Y^{0}, \beta)_{sm_{X}}.\]
By the classical contraction mapping principle, it follows that for any $X \in B_{\rm nc}(X^{0}, \alpha)$, there exists a unique  $Y = Y_{X }\mathrel {\mathop:} = f(X) \in B_{\rm nc}(Y^{0}, \beta)_{sm_{X}} $ that is a fixed point of the mapping
\[g_{X} \colon \closure{B}_{\rm nc}(Y^{0}, \beta)_{sm_{X}} \rightarrow \closure{B}_{\rm nc}(Y^{0}, \beta)_{sm_{X}}.\]

 Conclusions \textit{I} and  \textit{IIa} follow immediately.
We only need to show that $f$ is a nc function, i.e.,  respects direct sums and similarities.

First we prove that $f$ respects direct sums, i.e., for $X^{\prime}, X^{\prime \prime} \in B_{\rm nc}(X^{0}, \alpha)$, \[f(X^{\prime} \oplus X^{\prime \prime}) = f(X^{\prime}) \oplus f(X^{\prime \prime}).\]
  We have $g_{X^{\prime}}(Y_{X^{\prime}}) = Y_{X^{\prime}} \in B_{\rm nc}(Y^{0}, \beta)_{m_{X^{\prime}}}$,  $g_{X^{\prime \prime}}(Y_{X^{\prime \prime}}) = Y_{X^{\prime \prime}} \in B_{\rm nc}(Y^{0}, \beta)_{m_{X^{\prime \prime}}}$ and
\begin{equation} \label{eq:est12}
g_{X^{\prime} \oplus X^{\prime \prime}}(Y_{X^{\prime} \oplus X^{\prime \prime}}) = Y_{X^{\prime} \oplus X^{\prime \prime}}, \quad Y_{X^{\prime} \oplus X^{\prime \prime}}  \in B_{\rm nc}(Y^{0}, \eta)_{m_{X^{\prime}} + m_{X^{\prime \prime}}}.
\end{equation}
Now, by the definition of $g_{X},$
\begin{multline*}g_{X^{\prime} \oplus X^{\prime \prime}}\Big(Y_{X^{\prime}} \oplus Y_{X^{\prime \prime}}\Big)  \\  =
Y_{X^{\prime}} \oplus Y_{X^{\prime \prime}} - \Big(\delta^{Y}F(X^{0}, Y^{0})\Big)^{-1(m_{X^{\prime}} + m_{X^{\prime \prime}})} F\Big(X^{\prime} \oplus X^{\prime \prime}, Y_{X^{\prime}} \oplus Y_{X^{\prime \prime}}\Big).
\end{multline*}
Since  $F\Big(X^{\prime} \oplus X^{\prime}, Y_{X^{\prime}} \oplus Y_{X^{\prime \prime}}\Big) =  F\Big(X^{\prime}, Y_{X^{\prime}}\Big) \oplus F\Big(X, ^{\prime \prime}, Y_{X^{\prime \prime}}\Big) = 0 \oplus 0 = 0,$ we have
\begin{equation} \label{eq:est13}
g_{X^{\prime} \oplus X^{\prime \prime}}\Big(Y_{X^{\prime}} \oplus Y_{X^{\prime \prime}}\Big) = Y_{X^{\prime}} \oplus Y_{X^{\prime \prime}}.
\end{equation}
Since the fixed point is unique, it follows  from (\ref{eq:est12}) and (\ref{eq:est13})
\[Y_{X^{\prime} \oplus X^{\prime \prime}} =  Y_{X^{\prime}} \oplus Y_{X^{\prime \prime}},\, \, {\rm  or} \quad f\Big(X^{\prime} \oplus X^{\prime \prime}\Big) = f(X^{\prime}) \oplus f(X^{\prime \prime}).\]

 Now we show that $f$ respects similarities. Let  $ X \in B_{\rm nc}(X^{0}, \alpha)$ and let  $S \in \mathbb{F}^{sm_{X} \times sm_{X}}$ be invertible and such that $X^{'} \mathrel {\mathop:} = SXS^{-1}  \in B_{\rm nc}(X^{0}, \alpha).$  Then for any $c \in \mathbb{F},$ we have
\[ \left[
    \begin{array}{cc}
 X^{'}&0
\\ 0 &X
  \end{array}
\right] =
 \left[
    \begin{array}{cc}
I& - c S
\\ 0 &I
  \end{array}
\right] \left[
    \begin{array}{cc}
 X^{'}&0
\\ 0 &X
  \end{array}
\right] \left[
    \begin{array}{cc}
I& c S
\\ 0 &I
  \end{array}
\right],\]
or equivalently, $ X^{'} \oplus X = T_{c}\Big(X^{'} \oplus X\Big)T_{c}^{-1},$ where
\[ T_{c} \mathrel {\mathop:} =
\left[
    \begin{array}{cc}
I& - c S
\\ 0 &I
  \end{array}
\right].
\]
For $c \neq 0$ small enough, we have $T_{c}f\Big(X^{'} \oplus X \Big)T_{c}^{-1} \in B_{\rm nc}(Y^{0}, \beta).$ Then \[\Big( X^{'} \oplus X,  T_{c}f\Big(X^{'} \oplus X\Big)T_{c}^{-1}\Big) \subset \Omega.\]  Since
 \begin{multline*}
F\Big( X^{'} \oplus X, T_{c}f\Big(X^{'} \oplus X\Big)T_{c}^{-1}\Big) =
 F\Big(T_{c}\Big( X^{'} \oplus X\Big)T_{c}^{-1}, T_{c}f\Big(X^{'} \oplus X\Big)T_{c}^{-1}\Big)\\
 = T_{c}F\Big( X^{'} \oplus X, f\Big(X^{'} \oplus X\Big)\Big)T_{c}^{-1} = 0,
\end{multline*}
by the part proved earlier, we must have $T_{c}f(X^{'} \oplus X)T_{c}^{-1} = f(X^{'} \oplus X).$ Using $f(X^{'} \oplus X) = f(X^{'}) \oplus f(X),$ we obtain
\[
T_{c}\Big(f(X^{'}) \oplus f(X)\Big)T_{c}^{-1} = f(X^{'}) \oplus f(X),
\]
or equivalently,
\[
 \left[
    \begin{array}{cc}
I& - c S
\\ 0 &I
  \end{array}
\right] \left[
    \begin{array}{cc}
 f(X^{'})&0
\\ 0 &f(X)
  \end{array}
\right] \left[
    \begin{array}{cc}
I& c S
\\ 0 &I
  \end{array}
\right] =\left[
    \begin{array}{cc}
f( X^{'})&0
\\ 0 &f(X)
  \end{array}
\right].
\]
The left-hand side is equal to
\[
\left[
    \begin{array}{cc}
 f(X^{'})& c(f(X^{'})S - Sf(X))
\\ 0 &f(X)
  \end{array}
\right].
\]
Hence $f(X^{'})S = Sf(X),$ i.e., $f(SXS^{-1}) = f(X^{'}) = Sf(X)S^{-1}$ as required.

 To prove conclusion \textit{IIb}, we observe that   $f( B_{\rm nc}(X^{0}, \alpha)) \subset \closure{B}_{\rm nc}(Y^{0}, \beta)$  implies $\|f(X)\| \leq \|Y^{0}\| +  \beta$ for any $X \in B_{\rm nc}(X^{0}, \alpha)$. By \cite[Corollary 7.28]{KV-VV}, $f$ is bounded on $B_{\rm nc}(X^{0}, \alpha)$ if and only if  $f$ is uniformly analytic on $B_{\rm nc}(X^{0}, \alpha).$

Next we prove conclusion \textit{III}.
We have that
$F(X, f(X)) = 0$ for every $X \in B_{\rm nc}(X^{0}, \alpha).$ By \cite[Theorem 7.51 and Theorem 7.53]{KV-VV}, the derivatives of $F$ exist and moreover they are continuous on
\[B_{\rm nc}(X^{0}, \alpha) \times B_{\rm nc}(Y^{0}, \beta) \subset B_{\rm nc}((X^{0}, Y^{0}), \delta).\]
In particular,   $\delta^{Y}F$ and $\delta^{X}F$ are  continuous,
$$\delta^{Y}F(X, Y) \in \mathcal{L}_{\rm
cb}(\mathcal{Y}^{sm_{(X,Y)} \times sm_{(X,Y)}},
\mathcal{Z}^{sm_{(X,Y)} \times sm_{(X,Y)}})$$  and
$$\delta^{X}F(X, Y)\in\mathcal{L}_{\rm cb}(\mathcal{X}^{sm_{(X,Y)}
\times sm_{(X,Y)}}, \mathcal{Z}^{sm_{(X,Y)} \times sm_{(X,Y)}})$$
for every $(X, Y) \in B_{\rm nc}(X^{0}, \alpha) \times B_{\rm
nc}(Y^{0}, \beta).$  Using the chain rule we obtain
\[\delta^{X}F(X,f(X)) + \delta^{Y}F(X, F(X))(\delta f(X)) = 0.\]
Making $\beta$ (and hence the corresponding $\alpha$) smaller if necessary, we can make $\delta^{Y}F(X, Y)$ invertible on $B_{\rm nc}(X^{0}, \alpha) \times B_{\rm nc}(Y^{0}, \beta)$, with a completely bounded inverse. Then
\[\delta f(X) = - \Big(\delta^{Y}F(X, f(X))\Big)^{-1}\delta^{X}F(X, f(X))\]
as required.

\end{proof}

\begin{proof}[Proof of Theorem \ref{thm:ops-2}] Let \[\Omega  \mathrel {\mathop:} =  \coprod_{n =1}^{\infty}\left ( \mathcal{X}^{n \times n} \times \Omega_{n}^{Y}\right ) \subset (\mathcal{X} \times \mathcal{Y})_{\rm nc}.\]

 Consider the function  $F\colon \Omega  \rightarrow {\mathcal{X}}_{\rm nc}$,  $F(X, Y) = g(Y) - X$.
Denote $X^{0}= g(Y^{0}).$  By the construction of $F$, $F(X^{0}, Y^{0}) = g(Y^{0}) - X^{0} = 0,$
 and $F$ is uniformly analytic on $\Omega.$ $\delta^{Y}F(X^{0}, Y^{0}) = \delta g(Y^{0})$ is invertible,
  and $(\delta^{Y}F(X^{0}, Y^{0}))^{-1} = (\delta g(Y^{0}))^{-1} \in \mathcal{L}_{\rm cb}(\mathcal{X}^{s \times s}, \mathcal{Y}^{s \times s}).$ Thus all the assumptions of Theorem \ref{thm:ops-1} hold for $F$,  and  we obtain the following conclusions:
\begin{itemize}
\item There exist $\alpha >0,$ $\beta >0$ such that for any $m \in \mathbb{N},$
\[ B_{\rm nc}(X^{0}, \alpha)_{sm} \times B_{\rm nc}(Y^{0}, \beta)_{sm} \subset \Omega_{sm}.\]
\item There exists a nc function $ f \colon B_{\rm nc}(X^{0}, \alpha) \rightarrow B_{\rm nc}(Y^{0}, \beta),$
such that  for $(X, Y) \in  B_{\rm nc}(X^{0}, \alpha)_{sm} \times B_{\rm nc}(Y^{0}, \beta)_{sm},$
\[(F(X, Y) =0) \iff  (Y = f(X)).\]
\item $\delta^{Y}F(X, Y)$ has a completely bounded inverse on
$B_{\rm nc}(X^{0}, \alpha) \times B_{\rm nc}(Y^{0}, \beta)$ for every $m\in\mathbb{N}$.
\item $f$ is uniformly analytic on $B_{\rm nc}(X^{0}, \alpha)$  with
\[\delta f(X) = -\Big(\delta^{Y}F(X, f(X))\Big)^{-1} \delta^{X}F(X,f(X)).\]

\end{itemize}

 We now prove that $f$ is the inverse function for $g$, and that $\delta f(X) = \Big(\delta g(f(X))\Big)^{-1}.$
Since $F(X, Y) = g(Y) - X,$
\[ F(X, f(X)) = 0 \Rightarrow  F(X, f(X)) = g(f(X)) -X = 0 \Rightarrow  g(f(X)) = X.\]
 Next, we have
\begin{multline*}
\delta f(X) = -\Big(\delta^{Y}F(X, f(X))\Big)^{-1} \delta^{X}F(X,f(X)) \\ = -\Big(\delta^{Y}F(X, f(X))\Big)^{-1}(-{\rm id}_{{\mathcal{X}}^{sm_{X}\times sm_{X}}}) = \Big(\delta g(f(X))\Big)^{-1}.
\end{multline*}

Set  $\Gamma = B_{\rm nc}(X^{0}, \alpha),$ $ \Delta = f(\Gamma) \subset B_{\rm nc}(Y^{0}, \beta).$ Clearly,  $f\colon \Gamma \rightarrow \Delta$ is a surjection. Since $g(f(X)) = X$ for every $X \in \Gamma, f$ is also an injection. And since $f$ is uniformly analytic,  $ f$  is continuous. We also have that $f^{-1} = g|_{\Delta}$ is continuous. Thus $g|_{\Delta }\colon \Delta \rightarrow \Gamma$ is a homeomorphism, and $\Delta$ is a uniformly open nc set.
\end{proof}

\begin{lemma} \label{thm:nilp2}
In the assumptions of Theorem \ref{thm:nilp1}, for any $(X^{1}, Y^{1}), $ $(X^{2}, Y^{2}) \in \Nilp(\mathcal{M} \times \mathcal{N}, (X^{0}, Y^{0})),$
the operator $ \Delta_{R}^{Y}F((X^{1}, Y^{1}), (X^{2}, Y^{2})) $ is invertible.
\end{lemma}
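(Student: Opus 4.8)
The plan is to compute $\Delta_{R}^{Y}F((X^{1},Y^{1}),(X^{2},Y^{2}))$ directly from the Taylor--Taylor expansion and then exhibit it as an invertible operator perturbed by a nilpotent one. Write $P^{0}=(X^{0},Y^{0})$ and $P^{i}=(X^{i},Y^{i})\in(\mathcal{M}\times\mathcal{N})^{sm_{i}\times sm_{i}}$ for $i=1,2$, set $\widetilde{P}^{i}=P^{i}-(P^{0})^{(m_{i})}$ (nilpotent by hypothesis, say $(\widetilde{P}^{i})^{\odot_{s} \kappa_{i}}=0$), and for $Z\in\mathcal{N}^{sm_{1}\times sm_{2}}$ put $\iota(Z)=(0,Z)$. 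First I would note that the block upper triangular matrix $\widehat{P}(Z)=\left[\begin{smallmatrix}P^{1}&\iota(Z)\\0&P^{2}\end{smallmatrix}\right]$ always lies in $\Nilp(\mathcal{M}\times\mathcal{N},(X^{0},Y^{0}))$: its difference with $(P^{0})^{(m_{1}+m_{2})}$ is block upper triangular with nilpotent diagonal blocks $\widetilde{P}^{1},\widetilde{P}^{2}$, hence nilpotent of rank at most $\kappa_{1}+\kappa_{2}$, regardless of $Z$. Therefore $F(\widehat{P}(Z))$ is defined and, choosing the invertible element in the definition of $\Delta_{R}$ to be $1$, the value $\Delta_{R}^{Y}F(P^{1},P^{2})(Z)=\Delta_{R}F(P^{1},P^{2})(\iota(Z))$ equals the $(1,2)$ block of $F(\widehat{P}(Z))$.

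I would then apply the Taylor--Taylor expansion on $\Nilp$ (\cite[Theorem~5.6]{KV-VV}) to $F$ about $(P^{0})^{(m_{1}+m_{2})}$ and read off the $(1,2)$ block, using that the $(1,2)$ block of $\big(\widehat{P}(Z)-(P^{0})^{(m_{1}+m_{2})}\big)^{\odot_{s} \ell}$ is $\sum_{j+k=\ell-1}(\widetilde{P}^{1})^{\odot_{s} j}\odot_{s}\iota(Z)\odot_{s}(\widetilde{P}^{2})^{\odot_{s} k}$. This gives the finite sum
\[
\Delta_{R}^{Y}F(P^{1},P^{2})(Z)=\sum_{j,k\ge0}\Big((\widetilde{P}^{1})^{\odot_{s} j}\odot_{s}\iota(Z)\odot_{s}(\widetilde{P}^{2})^{\odot_{s} k}\Big)\,\Delta_{R}^{j+k+1}F\underbrace{(P^{0},\dots,P^{0})}_{j+k+2},
\]
the summands with $j\ge\kappa_{1}$ or $k\ge\kappa_{2}$ vanishing. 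The $j=k=0$ summand is the entrywise ampliation to $\mathcal{N}^{sm_{1}\times sm_{2}}$ of $A_{0}:=\Delta_{R}^{Y}F(P^{0},P^{0})\in\Hom(\mathcal{N}^{s\times s},\mathcal{O}^{s\times s})$, which is invertible by hypothesis~2; hence its ampliation $A$ is invertible, with inverse the ampliation of $A_{0}^{-1}$. Writing $\mathcal{K}$ for the sum of the remaining ($j+k\ge1$) summands, we have $\Delta_{R}^{Y}F(P^{1},P^{2})=A\circ(I+A^{-1}\mathcal{K})$, and the lemma is reduced to showing that $A^{-1}\mathcal{K}$ is a nilpotent operator, for then $I+A^{-1}\mathcal{K}$ is invertible by a Neumann series that terminates.

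The step I expect to be the main obstacle is exactly this nilpotency. Each summand of $\mathcal{K}$ carries at least one $\odot_{s}$-factor $\widetilde{P}^{1}$ or $\widetilde{P}^{2}$, so $\mathcal{K}$ strictly raises a ``$\widetilde{P}$-degree'', and one needs these degrees to accumulate along a composite $(A^{-1}\mathcal{K})^{N}$---rather than be reset by the intervening copies of $A^{-1}$ and the Taylor--Taylor coefficients---so that for $N$ large every term of $(A^{-1}\mathcal{K})^{N}$ is forced to contain a power $(\widetilde{P}^{1})^{\odot_{s} j}$ with $j\ge\kappa_{1}$ or $(\widetilde{P}^{2})^{\odot_{s} k}$ with $k\ge\kappa_{2}$, and so vanishes by the joint nilpotency of $\widetilde{P}^{1}$ and $\widetilde{P}^{2}$. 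This is transparent when $\mathcal{M}$, $\mathcal{N}$, $\mathcal{O}$ are of the form $\mathcal{R}^{d}$, where the Taylor--Taylor coefficients are honest noncommutative words in the matrix entries of $\widetilde{P}^{1}$ and $\widetilde{P}^{2}$ around a linear slot and the word lengths visibly add under composition; in the general module case one carries the same bookkeeping inside the matrix tensor algebra $\mathbf{T}\big((\mathcal{M}\times\mathcal{N})^{s\times s}\big)$, using the intertwining properties of the operators $\Delta_{R}^{\ell}F$. Granting that $A^{-1}\mathcal{K}$ is nilpotent, $\Delta_{R}^{Y}F((X^{1},Y^{1}),(X^{2},Y^{2}))$ is invertible, which is the assertion of the lemma.
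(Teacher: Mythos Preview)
Your proposal is correct and follows essentially the same route as the paper: expand $F$ on the block upper triangular matrix via the Taylor--Taylor formula, read off the $(1,2)$ block, and write $\Delta_R^Y F(P^1,P^2)$ as the invertible ampliation of $\Delta_R^Y F(P^0,P^0)$ composed with $I+N$ where $N$ is nilpotent. The paper simply asserts the nilpotency of $N$ in one line, whereas you correctly isolate it as the substantive step and sketch why the $\widetilde P^i$-degrees accumulate under composition (your naturality/intertwining remark is exactly what makes this work in the general module case); you are also slightly more careful than the paper in allowing $m_1\neq m_2$.
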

\begin{proof}
First we observe that for any $(X^{1}, Y^{1}),$ $ (X^{2}, Y^{2}) \in \Nilp(\mathcal{M} \times \mathcal{N}, (X^{0}, Y^{0})),$
$\left[
    \begin{smallmatrix}
X^{1}& 0
\\ 0 &X^{2}
  \end{smallmatrix}
\right]$ and $   \left[
    \begin{smallmatrix}
Y^{1}& Z
\\ 0 &Y^{2}
  \end{smallmatrix}\right]
$ are jointly nilpotent about $(X^{0}, Y^{0}).$
\begin{multline*}
\Delta_{R}^{Y}F((X^{1}, Y^{1}), (X^{2}, Y^{2}))(Z) = F\left(   \left[
    \begin{array}{cc}
X^{1}& 0
\\ 0 &X^{2}
  \end{array} \right],  \left[
    \begin{array}{cc}
Y^{1}& Z
\\ 0 &Y^{2}
  \end{array}
\right]\right)_{(1, 2)} \\ =
\sum_{\ell = 1}^{\infty} \left\{\left(   \left[
    \begin{array}{cc}
X^{1} - X^{0(m)}& 0
\\ 0 &X^{2} - X^{0(m)}
  \end{array}
\right],  \left[
    \begin{array}{cc}
Y^{1} - Y^{0(m)}& Z
\\ 0 &Y^{2} - Y^{0(m)}
  \end{array}
\right]\right)^{\odot_{s} \ell}\right\}_{(1, 2)} \\ \times
\Delta_{R}^{\ell}F((X^{0}, Y^{0}), (X^{0}, Y^{0}))\\ =
Z\Delta_{R}^{Y}F((X^{0}, Y^{0}), (X^{0}, Y^{0})) + \sum_{\ell = 1}^{\infty}\sum_{j = 0}^{\ell - 1}(X^{1} - X^{0(m)}, Y^{1} - Y^{0(m)})^{\odot_{s} j}\odot_{s} Z \\  \odot_{s}
(X^{2} - X^{0(m)}, Y^{2} - Y^{0(m)})^{\odot_{s} (\ell - j)}
\Delta^{\ell + 1}_{R}F((X^{0}, Y^{0}), (X^{0}, Y^{0}))  \\ =
\Big( \Delta_{R}^{Y}F((X^{0}, Y^{0}), (X^{0}, Y^{0}))\Big)^{(m)}\Big(({\rm id} + N)(Z)\Big),
\end{multline*}
where
\begin{multline*}
N(Z) =  \sum_{\ell = 1}^{\infty}\sum_{j = 0}^{\ell - 1}(X^{1} - X^{0(m)}, Y^{1} - Y^{0(m)})^{\odot_{s} j}\odot_{s} Z  \odot_{s}
(X^{2} - X^{0(m)}, Y^{2} - Y^{0(m)})^{\odot_{s}(\ell - j)} \\
\times \Big(\Delta_{R}^{Y}F((X^{0}, Y^{0}), (X^{0}, Y^{0}))\Big)^{-1}\Delta^{\ell + 1}_{R}F((X^{0}, Y^{0}), (X^{0}, Y^{0})).
\end{multline*}
The linear operator $N$ is nilpotent, i.e., $N^{\gamma} = 0$ for some $\gamma \in \mathbb{N}.$ Therefore  the operator $  {\rm id} + N$ is invertible.  Thus  the operator $\Delta_{R}^{Y}F((X^{1}, Y^{1}), (X^{2}, Y^{2}))$
is invertible.
\end{proof}

\begin{proof}[Proof of Theorem \ref{thm:nilp1}]
 For  every $m, \kappa \in \mathbb{N}$ and $ X \in \Nilp(\mathcal{M}, X^{0}; sm, \kappa),$  we define  the set
\[\Upsilon_{X} \mathrel {\mathop:} =  \left \{ Y \in \mathcal{N}^{sm \times sm} \colon (X, Y) \in \Nilp(\mathcal{M} \times \mathcal{N}, (X^{0},Y^{0}), \kappa^{\prime}), {\rm for \,\, some\,\, } \kappa^{\prime} \geq \kappa \right \}.\] Notice that $\Upsilon_{X} \subseteq\Nilp(\mathcal{N}, Y^{0}, \kappa^{\prime})$ for some  $\kappa^{\prime} \geq \kappa$. We also define the mapping $g_{X}\colon \Upsilon_{X}\rightarrow \Upsilon_{X}$ by
\begin{equation}\label{eq:nilp-2}
 g_{X}(Y) = Y - \Big(\Delta_{R}^{Y}F((X^{0}, Y^{0}), (X^{0}, Y^{0}))\Big)^{-1(m)} F(X, Y).
\end{equation}
We now show that $g_{X}$ is well defined, i.e., $ g_{X}(Y) \in
\Upsilon_{X}$ for $Y \in \Upsilon_{X}.$ Indeed, since $Y \in
\Upsilon_{X}$ implies that $(X, Y) \in \Nilp(\mathcal{M} \times
\mathcal{N}, (X^{0}, Y^{0}); \kappa^{\prime})$  for some
$\kappa^{\prime} \geq \kappa,$ it suffices to show that
 \[
\Big(X - X^{0(m)}, g_{X}(Y) - Y^{0(m)}\Big)^{\odot_{s} \kappa^{\prime}} = 0.
\]
 Using the TT formula
\begin{multline*}
 F(X,Y) = F(X^{0(m)}, Y^{0(m)})\\
+ \sum_{\ell =1}^{\kappa^{\prime} -1} (X - X^{0(m)}, Y - Y^{0(m)})^{\odot_{s} \ell} \Delta_{R}^{\ell}F((X^{0}, Y^{0}), (X^{0}, Y^{0})),
\end{multline*}
we obtain
\begin{multline*}
\Big( X - X^{0(m)}, g_{X}(Y) - Y^{0(m)}\Big)^{\odot_{s} \kappa^{\prime}} \\ = \Big( X - X^{0(m)}, Y -  Y^{0(m)} -  \Big(\Delta_{R}^{Y}F((X^{0(m)}, Y^{0(m)}), (X^{0(m)}, Y^{0(m)}))\Big)^{-1}\\
\times \sum_{\ell =1}^{\kappa^{\prime} -1} (X - X^{0(m)}, Y - Y^{0(m)})^{\odot_{s} \ell }
\Delta_{R}^{\ell}F((X^{0}, Y^{0}), (X^{0}, Y^{0}))\Big)^{\odot_{s }\kappa^{\prime}}  \\
= \Big( X - X^{0(m)}, Y -  Y^{0(m)} -
 \sum_{\ell =1}^{\kappa^{\prime} -1} (X - X^{0(m)}, Y - Y^{0(m)})^{\odot_{s} \ell }\\
\times \Big(\Delta_{R}^{Y}F((X^{0}, Y^{0}), (X^{0}, Y^{0}))\Big)^{-1} \Delta_{R}^{\ell}F((X^{0}, Y^{0}), (X^{0}, Y^{0})) \Big)^{\odot_{s }\kappa^{\prime}}= 0,
\end{multline*}
since
\[
( X - X^{0(m)}, Y - Y^{0(m)})^{\odot_{s} w} = 0
\]
for every word $w$ in two letters $g_{1},$ $g_{2}$ of length $\kappa^{\prime}$ or greater.
 Here for $w = g_{i_{1}} \hdots g_{i_{k}}$ we define
$$
\Big( X - X^{0(m)}, Y - Y^{0(m)}\Big)^{\odot_{s} w} = \Big(
X_{i_{1}} - X^{0(m)}_{i_{1}}\Big){\odot_{s}} \cdots
{\odot_{s}}\Big( X_{i_{k}} - X^{0(m)}_{i_{k}}\Big),
$$
where
\begin{equation*}
X_{i} - X^{0(m)}_{i}  \mathrel {\mathop:} =  \begin{cases}X - X^{0(m)}, & \mbox{if } i\mbox{ = 1} \\ Y - Y^{0(m)}, & \mbox{if } i\mbox{ = 2} .\end{cases}
\end{equation*}
The TT formula also implies that  $F(X, Y) \in \Nilp(\mathcal{O}; sm, \kappa^{\prime})$ for $(X, Y) \in \Nilp(\mathcal{M}\times \mathcal{N}, (X^{0}, Y^{0}); sm, \kappa^{\prime}).$

Now we prove that for every  $ m,\kappa \in \mathbb{N}$ and  $X \in \Nilp(\mathcal{M}, X^{0}; sm,\kappa),$ there exists  $Y  \in \Upsilon_{X}$  such that $F(X, Y) = 0.$ We define  a sequence
\[  Y^{[0]} = Y^{0(m)},  \, \, Y^{[k + 1]} = g_{X}(Y^{[k]}), \, \, k = 0, 1, \hdots, \]
and  claim that
\begin{equation}\label{eq:nilp-3}
Y^{[k + 1]} - Y^{[k]} = (X - X^{0(m)})^{\odot_{s} k+1} f_{k + 1} +  {\rm higher  \,\, order \,\, terms},
\end{equation}
where $f_{k+1} \in \Hom ((\mathcal{M}^{s \times s})^{\otimes k + 1}, \mathcal{N}^{s \times s})$, and is extended to \[\Hom \Big(((\mathcal{M}^{s \times s})^{\otimes k + 1})^{m \times m}, (\mathcal{N}^{s \times s})^{m \times m}\Big)\] by $f_{k + 1} (A) = \left [ f_{k + 1}(a_{ij})\right ]_{i,j = 1, \hdots , m}.$

We apply  induction on $k$.  Note  that $(X, Y^{0(m)})$ is in $\Nilp(\mathcal{M} \times \mathcal{N}, (X^{0}, Y^{0}); sm, \kappa)$ since $(X - X^{0(m)}, Y^{0(m)}-  Y^{0(m)}) = (X - X^{0(m)}, 0).$  We first show that  (\ref{eq:nilp-3}) holds for $k = 0$. Using (\ref{eq:nilp-2})  and the TT formula for $F( \,\cdot \,, Y^{0(m)}),$ we obtain
\begin{multline}\label{eq:nilp-4}
Y^{[1]} - Y^{[0]} = -  \Big(\Delta_{R}^{Y}F((X^{0(m)}, Y^{0(m)}), (X^{0(m)}, Y^{0(m)}))\Big)^{-1} F(X, Y^{0(m)})\\
=- \sum^{\kappa -1}_{\ell = 1}(X - X^{0(m)})^{\odot_{s} \ell}\Big(\Delta_{R}^{Y} F((X^{0}, Y^{0}), (X^{0}, Y^{0}))\Big)^{-1} \Big (\Delta_{R}^{X}\Big)^{ \ell}F((X^{0}, Y^{0}), (X^{0}, Y^{0}))\\
= - (X - X^{0(m)}) \Big(\Delta_{R}^{Y} F((X^{0}, Y^{0}), (X^{0}, Y^{0}))\Big)^{-1}\Delta_{R}^{X}F((X^{0}, Y^{0}), (X^{0}, Y^{0})) \\ +   {\rm higher  \,\, order \,\, terms} \\ =
(X - X^{0(m)}) f_{1} +  {\rm higher  \,\, order \,\, terms}.
\end{multline}

Let (\ref{eq:nilp-3}) be true for all powers $0, 1, \hdots, k - 1,$ where  $k \in \mathbb{N}.$ Using (\ref{eq:nilp-2})  and the TT formula for $F( X, Y^{[k]}),$ we obtain
\begin{multline*}
Y^{[k + 1]} - Y^{[k]} = - \Big(\Delta_{R}^{Y}F((X^{0(m)}, Y^{0(m)}), (X^{0(m)}, Y^{0(m)}))\Big)^{-1}F(X, Y^{[k]})\\
=- \Big(\Delta_{R}^{Y}F((X^{0(m)}, Y^{0(m)}), (X^{0(m)}, Y^{0(m)}))\Big)^{-1}\\
\times \sum^{\infty}_{\ell = 1}(X - X^{0(m)}, Y^{[k]} - Y^{0(m)})^{\odot_{s} \ell}
 \Delta_R^\ell F((X^{0}, Y^{0}), (X^{0}, Y^{0}))
\\= - \sum^{\infty}_{\ell = 1}(X - X^{0(m)}, Y^{[k]} - Y^{[k - 1]} + Y^{[k - 1]} - Y^{0(m)})^{\odot_{s} \ell} \\ \times \Big(\Delta_{R}^{Y} F((X^{0}, Y^{0}), (X^{0}, Y^{0}))\Big)^{-1}
 \Delta_R^\ell F((X^{0}, Y^{0}), (X^{0}, Y^{0}))  \\ = {\rm  summands \,\,with \,\, powers \,\, not \, \, involving} \,\, (Y^{[k]} - Y^{[k - 1]}) \\
+  {\rm  summands \,\,with \,\, powers \,\, involving} \,\, (Y^{[k]} - Y^{[k - 1]})\\
= - \sum^{\infty}_{\ell = 1}(X - X^{0(m)}, Y^{[k - 1]} - Y^{0(m)})^{\odot_{s} \ell} \\ \times \Big (\Delta_{R}^{Y} F((X^{0}, Y^{0}), (X^{0}, Y^{0}))\Big)^{-1}  \Delta_R^{\ell}F((X^{0}, Y^{0}), (X^{0}, Y^{0}))   \\ -
(Y^{[k]} - Y^{[k - 1]}) +
{\rm  higher \,\,order \,\, terms \,\, } \\
= - \Big(\Delta_{R}^{Y} F((X^{0(m)}, Y^{0(m)}), (X^{0(m)}, Y^{0(m)}))\Big)^{-1}F(X, Y^{[k - 1]}) - (Y^{[k]} - Y^{[k - 1]})  \\+
 (X - X^{0(m)})^{\odot_{s} k + 1} f_{k + 1} +  {\rm higher  \,\, order \,\, terms} \\ =
(X - X^{0(m)})^{\odot_{s} k + 1} f_{k + 1} +  {\rm higher  \,\, order \,\, terms} .
\end{multline*}
Note that all  the sums have finitely many nonzero terms.

It follows from (\ref{eq:nilp-3}) that  $Y^{[k + 1]} - Y^{[k]} \in  \Nilp\Big(\mathcal{N}; sm, \lceil \frac{\kappa}{k + 1} \rceil\Big).$ In particular,  $Y^{[\kappa]} - Y^{[\kappa - 1]} \in  \Nilp(\mathcal{N}; sm, 1),$ i.e.,  $  Y^{[\kappa]} - Y^{[\kappa - 1]} = 0,$ and thus $F(X, Y^{[\kappa - 1]}) = 0.$

Now define $f(X)\mathrel {\mathop:} =  Y^{[\kappa - 1]}$. Obviously, $(X, Y^{[\kappa -1]}) \in \Nilp(\mathcal{M} \times \mathcal{N}, (X^{0}, Y^{0}); sm)$ so $ f( \Nilp(\mathcal{M}, X^{0}; sm)) \subset \Nilp(\mathcal{N}, Y^{0}; sm).$ Thus,  for each $X \in \Nilp(\mathcal{M}, X^{0})$ there exists $Y(= f(X)) \in {\Upsilon}_{X}$ such that $F(X, Y) = 0.$

Next we  show that the solution of $F(X, Y) = 0$ for  each $X \in \Nilp(\mathcal{M}, X^{0})$ is unique. Moreover, if  $ (X, Y), (X, Y^{\prime}) \in \Nilp(\mathcal{M} \times \mathcal{N}, (X^{0}, Y^{0}); sm)$  are such that $F(X, Y) = F(X, Y^{\prime}),$ then we have $Y = Y^{\prime}.$
By \cite[Theorem 2.10]{KV-VV}) (cf. \eqref{eq:intro6})
\[ 0=F(X, Y) - F(X, Y') = \Delta_{R}^{Y}F((X, Y'), (X, Y))(Y - Y').
\]
By  Lemma \ref{thm:nilp2}, the operator $\Delta_{R}^{Y}F((X, Y'), (X, Y))$ is invertible, hence $Y = Y^{\prime}.$

 We have proved that for each $X$ $ \in \Nilp(\mathcal{M}, X^{0})$ there exists a unique $Y(= f(X))$ $ \in \Upsilon_{X}$ such that $F(X, Y) = 0.$

We prove now that $f \colon \Nilp(\mathcal{M}, X^{0}) \rightarrow \Nilp(\mathcal{N}, Y^{0})$ is a nc function. Let $X  \in \Nilp(\mathcal{M}, X^{0}; sm),$ $\tilde{X}  \in \Nilp(\mathcal{M}, X^{0}; s\tilde{m}),$ and  $S \in \mathcal{M}^{s\tilde{m} \times sm}$ be such that $SX = \tilde{X}S.$ Then we have $(X, f(X)) \in \Nilp(\mathcal{M} \times \mathcal{N}, (X^{0}, Y^{0}); sm),$  $(\tilde{X}, f(\tilde{X})) \in \Nilp(\mathcal{M} \times \mathcal{N}, (X^{0}, Y^{0}); s\tilde{m}).$ Applying \cite[Theorem 2.11]{KV-VV}, we obtain
\begin{multline*}
SF(X, f(X)) - F(\tilde{X}, f(\tilde{X}))S \\ = \Delta_{R} F((\tilde{X}, f(\tilde{X})), (X, f(X))
(S(X, f(X))  - (\tilde{X}, f(\tilde{X}))S)  \\ =
\Delta_{R}^{X} F((\tilde{X}, f(\tilde{X})), (X, f(X))
(SX  - \tilde{X}S) \\ +
\Delta_{R}^{Y} F((\tilde{X}, f(\tilde{X})), (X, f(X))
(Sf(X)  - f(\tilde{X})S)\\
= \Delta_{R}^{Y} F((\tilde{X}, f(\tilde{X})), (X, f(X))
(Sf(X)  - f(\tilde{X})S) .
\end{multline*}
Since the left-hand side is $0$ and by Lemma  \ref{thm:nilp2} the operator $$\Delta_{R}^{Y} F((\tilde{X}, f(\tilde{X})), (X, f(X)) $$ is invertible, we obtain that   $Sf(X)  -  f(\tilde{X})S = 0,$ i.e., $f $ respects intertwinings. Thus we have proved conclusion \textit{I}.

Finally, we prove conclusion  \textit{II}.   If $X  \in \Nilp(\mathcal{M}, X^{0}; sm, \kappa),$ then  $(X, f(X)) \in \Nilp(\mathcal{M} \times \mathcal{N}, (X^{0}, Y^{0}); sm, \kappa).$  We also observe that
$\left[
    \begin{smallmatrix}
X& Z
\\ 0 &X
  \end{smallmatrix}
\right]$ $\in \Nilp(\mathcal{M}, X^{0}; 2sm, \kappa+1) $ for an arbitrary $Z \in \mathcal{M}^{sm \times sm}.$ Hence
\begin{multline*}
\left(   \left[
    \begin{array}{cc}
X& Z
\\ 0 &X
  \end{array} \right],  f\left( \left[
    \begin{array}{cc}
X& Z
\\ 0 &X
  \end{array}
\right]\right)\right) \\
= \left(   \left[
    \begin{array}{cc}
X& Z
\\ 0 &X
  \end{array} \right],  \left[
    \begin{array}{cc}
f(X)& \Delta_{R} f(X, X)(Z)
\\ 0 &f(X)
  \end{array}
\right]\right) \in \Nilp(\mathcal{M} \times \mathcal{N} , (X^{0}, Y^{0}); 2sm, \kappa+1).
\end{multline*}
By part \textit{I},
\begin{equation*}
0 =  F\left(   \left[
    \begin{array}{cc}
X& Z
\\ 0 &X
  \end{array} \right],  \left[
    \begin{array}{cc}
f(X)& \Delta_{R} f(X, X)(Z)
\\ 0 &f(X)
  \end{array}
\right]\right).
\end{equation*}
Therefore
\begin{multline*}
0 =  F\left(   \left[
    \begin{array}{cc}
X& Z
\\ 0 &X
  \end{array} \right],  \left[
    \begin{array}{cc}
f(X)& \Delta_{R} f(X, X)(Z)
\\ 0 &f(X)
  \end{array}
\right]\right)_{(1, 2)} \\
= \Delta_{R}F\Big((X, f(X)), (X,f(X))\Big) \Big(Z, \Delta_{R} f(X, X)(Z)\Big ) \\
= \Delta_{R}^{X}F\Big((X, f(X)), (X,f(X))\Big)(Z)  +\Delta_{R}^{Y}F\Big((X, f(X)), (X,f(X))\Big) (\Delta_{R}f(X, X)(Z)).
\end{multline*}
Since $\Delta_{R}^{Y}F\Big((X, f(X)), (X,f(X))\Big)$ is invertible by Lemma \ref{thm:nilp2}, we obtain
\begin{multline*}
\Delta_{R}f(X, X)(Z) =\\
 - \Big(\Delta_{R}^{Y}F((X, f(X)), (X,f(X)))\Big)^{-1} \Delta_{R}^{X}F\Big((X, f(X)), (X,f(X))\Big)(Z).
\end{multline*}
\end{proof}

\begin{proof}[Proof of Theorem \ref{thm:nilp3}]
Denote $X^{0} = g(Y^{0}).$ Consider the function $F(X, Y) = g(Y) - X.$ Then by the construction, we have
\begin{itemize}
\item $ F\colon \Nilp(\mathcal{M} \times \mathcal{N}, (X^{0}, Y^{0})) \rightarrow \mathcal{M}_{\rm nc},$
\item $F(X^{0}, Y^{0}) = 0,$
\item $F $ is a nc function, and $\Delta_{R}^{Y}F((X^{0}, Y^{0}), (X^{0}, Y^{0}) ) = \Delta_{R}g(Y^{0}, Y^{0})$ is invertible.
\end{itemize}
Thus all the assumptions of  Theorem \ref{thm:nilp1} hold, and  we obtain the following conclusions:
\begin{itemize}
\item  There exists a nc function $ f\colon \Nilp(\mathcal{M}, X^{0}) \rightarrow \Nilp(\mathcal{N}, Y^{0}) \subset \mathcal{N}_{\rm nc}$  such that $X \in \Nilp(\mathcal{M}, X^{0}) $ if and only if  $(X, f(X)) \in \Nilp(\mathcal{M} \times \mathcal{N}, (X^{0}, Y^{0})),$ and $Y = f(X)$ if and only if  $F(X, Y) = 0.$

\item  For any $X \in \Nilp(\mathcal{M}, X^{0}),$ the operator $\Delta_{R}^{Y}F((X, f(X)), (X, f(X)))$ is invertible, and \begin{multline} \label{eq:nilp-7}
\Delta_{R}f(X, X)(Z) \\
= -\Big(\Delta_{R}^{Y}F((X, f(X)), (X, f(X)))\Big)^{-1} \Delta_{R}^{X}F((X, f(X)), (X, f(X)))(Z).
\end{multline}
\end{itemize}
If $g(Y^{1}) = g(Y^{2}),$ then $g(Y^{1}) - g(Y^{2}) = \Delta_{R}g(Y^{2}, Y^{1})(Y^{1} - Y^{2})$ for any  $Y^{1}$  and $ Y^{2} \in \Nilp(\mathcal{N}, Y^{0}; sm)$ for some $m.$  By Lemma \ref{thm:nilp2}, the operator $\Delta_{R}g(Y^{2}, Y^{1}) = \Delta_{R}^{Y}F\Big((X^{0(m)}, Y^{2}), (X^{0(m)}, Y^{1})\Big)$  is invertible. Therefore, $ Y^{1} = Y^{2}.$ This means that $g$ is one-to-one.

Now from  $0 = F(X,Y) = g(Y) - X,$ we have  $g\Big(f(X)\Big) \equiv X$ for every $X \in \Nilp(\mathcal{M}, X^{0}).$  Hence $g\Big(\Nilp(\mathcal{N}, Y^{0})\Big) \supseteq \Nilp(\mathcal{M}, X^{0}).$ By \cite[Remark 5.7]{KV-VV}, we also have the inclusion ``$\subseteq$", so that $g\Big(\Nilp(\mathcal{N}, Y^{0})\Big) = \Nilp(\mathcal{M}, X^{0}).$

We have  $g\Big(f(X)\Big) = X$ for every $X \in \Nilp(\mathcal{M}, X^{0}).$  Also, given $Y \in \Nilp(\mathcal{N}, Y^{0})$, we have $g(f(g(Y))) = g(Y).$ Since $g$ is one-to-one, it follows that $Y = f\Big(g(Y)\Big).$ Thus $f  =g^{-1}.$

By Lemma \ref{thm:nilp2}, $\Delta_{R}g(Y, Y) = \Delta_{R}^{Y}F\Big((X, Y), (X, Y)\Big)$ is invertible, where $X\in\Nilp(\mathcal{M}, X^0)$ is arbitrary.
Therefore $\Delta_Rg(f(X),f(X))$ is invertible for every $X\in\Nilp(\mathcal{M}, X^0)$, and
\begin{multline*}
\Delta_{R}f(X, X) = - \Big(\Delta_{R}^{Y}F((X, f(X)), (X, f(X)))\Big)^{-1}\Delta_{R}^{X}F\Big((X, f(X)), (X, f(X))\Big) \\ =
- \Big(\Delta_{R}^YF((X, f(X)), (X, f(X))\Big)^{-1}(- {\rm id}) = \Big(\Delta_{R}^YF((X, f(X)), (X, f(X))\Big)^{-1}\\
=\Big(\Delta_Rg(f(X),f(X))\Big)^{-1}.
\end{multline*}
The proof is complete.
\end{proof}

\section{Applications}

\subsection{Initial value problems for ODEs in nc spaces}
In our earlier paper \cite{AK}, we obtained a nc version of the
Banach contraction mapping theorem and then applied it to obtain a
theorem on the existence and uniqueness of the solution of the
initial value problem for ODEs of the form $\dot{Y}=g(t,Y)$ in nc
spaces. The right-hand side $g$ was assumed to be a nc function of
$Y$ for every fixed $t\in\mathbb{R}$ and satisfied a global
Lipschitz condition. Consequently, the solution is also globally
defined and having a certain direct-sum structure when the initial
condition has a similar structure. In this section, we obtain a
complementary result using the implicit nc function theorem. The
global Lipschitz condition on $g$ is now replaced by the
assumption of continuity of its Gateaux derivative. Then the
existence and uniqueness of the local solution of the initial
value problem is established. Moreover, we show that the solution
is a uniformly analytic nc function of the initial data.

\begin{thma}
Let $\mathcal{I}$ be an interval in $\mathbb{R}$ and $t_0$ an
interior point of $\mathcal{I}$. Let $\mathcal{X}$ be a (real or
complex) operator space, $\Gamma\subseteq\mathcal{X}_{\rm nc}$ a
uniformly open nc set, $s\in\mathbb{N}$, and $X^0\in\Gamma_s$.
Suppose that $g\colon\mathcal{I}\times\Gamma \to\mathcal{X}_{\rm
nc}$ is a continuous mapping with respect to the uniformly-open
topology on $\mathcal{X}_{\rm nc}$, its G-derivative $\delta g$ is
continuous in the norm $\|\cdot\|_{\mathcal{L}_{\rm
cb}(\mat{\mathcal{X}}{s})}$, and that
$g(t,\cdot)\colon\Gamma\to\mathcal{X}_{\rm nc}$ is a nc function
for every fixed $t\in\mathcal{I}$. Then there exist $\delta>0$
with $[t_0-\delta,t_0+\delta]\subset\mathcal{I}$, $\alpha>0$, and
a uniformly analytic nc function
\begin{equation}\label{eq:uaf}
f\colon B_{\rm nc}(X^0,\alpha)\to
C^1([t_0-\delta,t_0+\delta],\mat{\mathcal{X}}{s})_{\rm nc}
\end{equation}
such that, for every $X\in B_{\rm nc}(X^0,\alpha)$, $Y=f(X)$ is a
unique solution of the initial value problem for the ODE
\begin{equation}\label{eq:IVP}
\dot{Y}=g(t,Y),\qquad Y(t_0)=X.
\end{equation}
Here $C^1([t_0-\delta,t_0+\delta],\mat{\mathcal{X}}{s})$ is an
operator space of $\mat{\mathcal{X}}{s}$-valued continuously
differentiable functions on $[t_0-\delta,t_0+\delta]$ with respect
to the sequence of norms
\begin{equation*}
  \|Y\|_{sm}=\max\{\|Y\|_\infty,\|\dot{Y}\|_\infty\},\quad
  m=1,2,\ldots,
\end{equation*}
where
$$\|Y\|_\infty=\max_{t\in[t_0-\delta,t_0+\delta]}\|Y(t)\|_{sm}$$
and similarly for $\|\dot{Y}\|_\infty$.
\end{thma}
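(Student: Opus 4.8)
The plan is to recast \eqref{eq:IVP} as an equation $F(X,Y)=0$ with $F$ a nc function between operator spaces of $C^1$ curves, and then to invoke the implicit nc function theorem, Theorem \ref{thm:ops-1}. To set the stage, choose $\rho>0$ with $B_{\rm nc}(X^0,\rho)\subseteq\Gamma$. At matrix level $s$ the map $g$ is jointly continuous and, by the $\mathcal{L}_{\rm cb}(\mat{\mathcal{X}}{s})$-continuity of $\delta g$, locally Lipschitz in its second variable uniformly for $t$ near $t_0$, so the classical Picard--Lindel\"of theorem in the Banach space $\mat{\mathcal{X}}{s}$ gives $\delta_0>0$ and a solution $Y^0\in C^1([t_0-\delta_0,t_0+\delta_0],\mat{\mathcal{X}}{s})$ of \eqref{eq:IVP} with $X=X^0$ (only existence is needed). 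I then fix $\delta\in(0,\delta_0]$ small enough that $Y^0(t)\in B(X^0,\rho/2)$ for $|t-t_0|\le\delta$, so that by the first operator space axiom $Y^{0(m)}(t)\in B(X^{0(m)},\rho/2)$ for every $m$ and $t$; I set $\mathcal{C}:=C^1([t_0-\delta,t_0+\delta],\mathcal{X})$ with the norms of the statement — the two operator space axioms for $\mathcal{C}$ follow from those of $\mathcal{X}$ applied pointwise in $t$ to $Y$ and $\dot Y$, and $\mat{\mathcal{C}}{k}\cong C^1([t_0-\delta,t_0+\delta],\mat{\mathcal{X}}{k})$. Finally I take $\Omega\subseteq(\mathcal{X}\times\mathcal{C})_{\rm nc}$ to be the set of pairs $(X,Y)$ with $Y(t)\in B_{\rm nc}(X^0,\rho)$ for every $t$; since $B_{\rm nc}(X^0,\rho)$ is a nc set and the compact curve $Y^0$ lies at positive distance from its boundary, $\Omega$ is a uniformly open nc set with $(X^0,Y^0)\in\Omega_s$.

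Next I would define
\[
F\colon\Omega\to\mathcal{C}_{\rm nc},\qquad F(X,Y)(t)=Y(t)-X-\int_{t_0}^t g(\tau,Y(\tau))\,d\tau,
\]
and verify the hypotheses of Theorem \ref{thm:ops-1}. That $F$ is a well-defined nc function is routine: $\tau\mapsto g(\tau,Y(\tau))$ is norm-continuous (so $F(X,Y)\in C^1$), and $F$ respects direct sums and similarities because $g(t,\cdot)$ does, these operations on curves act pointwise in $t$, and integration commutes with them; also $F(X^0,Y^0)=0$ by the choice of $Y^0$. Differentiating under the integral, $\delta^Y F(X^0,Y^0)=I-K$, where $K$ is the Volterra operator $(KZ)(t)=\int_{t_0}^t\delta g(\tau,Y^0(\tau))(Z(\tau))\,d\tau$. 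The two substantive hypotheses — continuity of $F$ at $(X^0,Y^0)$ in the uniformly-open topologies, and invertibility of $\delta^Y F(X^0,Y^0)$ with completely bounded inverse — both reduce to estimates \emph{uniform in the matrix size}. Since $g(t,\cdot)$ is a uniformly-open-continuous nc function it is uniformly locally bounded near $X^0$ by \cite[Corollary 7.28]{KV-VV}; together with compactness in $t$ (shrinking $\rho,\delta$ once more) this and the Cauchy-type bounds of \cite[Theorem 7.51]{KV-VV} produce a single constant $L$ bounding $\|\delta g(t,W)\|$ in operator norm over all levels $m$, all $t\in[t_0-\delta,t_0+\delta]$, and all $W$ in a neighborhood of $t\mapsto Y^{0(m)}(t)$ — it is the $\mathcal{L}_{\rm cb}$-continuity of $\delta g$ that legitimizes passing these bounds to all levels. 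Feeding $L$ into the finite-difference identity \eqref{eq:intro6} for $g(t,\cdot)$ bounds $\|g(t,Y(t))-g(t,Y^{0(m)}(t))\|_{sm}$ by $L\,\|Y-Y^{0(m)}\|_{\mat{\mathcal{C}}{sm}}$ uniformly in $m$ and $t$, giving the continuity of $F$ in both the sup- and derivative-norms defining $\mathcal{C}$; and the standard iterated-integral estimate $\|K^n\|\le(L\delta)^n/n!$, valid at every level because the level-$m$ amplification of $K$ is the Volterra operator built from $(\delta g(\tau,Y^0(\tau)))^{(m)}$, of norm $\le L$, makes $\sum_n K^n$ converge in $\mathcal{L}_{\rm cb}(\mathcal{C},\mathcal{C})$, so $\delta^Y F(X^0,Y^0)$ has a completely bounded inverse. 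I expect this uniform-in-$m$ bookkeeping to be the main obstacle; the completely-bounded hypotheses on $\delta g$ are exactly what it consumes, and without them one would only get a Banach-space, level-by-level statement.

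Applying Theorem \ref{thm:ops-1} then yields $\alpha,\beta>0$ and a uniformly analytic nc function $f\colon B_{\rm nc}(X^0,\alpha)\to B_{\rm nc}(Y^0,\beta)\subseteq\mathcal{C}_{\rm nc}$ with $F(X,Y)=0\iff Y=f(X)$ on the bi-ball. Unwinding, $Y=f(X)$ satisfies $Y(t)=X+\int_{t_0}^t g(\tau,Y(\tau))\,d\tau$, i.e.\ solves \eqref{eq:IVP} on $[t_0-\delta,t_0+\delta]$; since the level-$m$ part of $B_{\rm nc}(X^0,\alpha)$ consists of $sm\times sm$ matrices and $\mat{\mathcal{C}}{sm}\cong C^1([t_0-\delta,t_0+\delta],\mat{\mathcal{X}}{sm})$, this $f$ is precisely the map \eqref{eq:uaf}, uniformly analytic by the theorem. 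For the uniqueness clause I would shrink $\alpha$ so that $\alpha e^{L\delta}<\beta$ and apply Gronwall's inequality to see that any $C^1$-solution of \eqref{eq:IVP} with values in $B_{\rm nc}(X^0,\rho)$ must remain in $B_{\rm nc}(Y^0,\beta)$, hence coincide with $f(X)$ by Theorem \ref{thm:ops-1}; equivalently this is the classical uniqueness theorem, available because $g(t,\cdot)$ is locally Lipschitz.
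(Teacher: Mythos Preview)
Your approach is essentially the paper's: recast \eqref{eq:IVP} as $F(X,Y)=0$ with
\[
F(X,Y)(t)=Y(t)-X-\int_{t_0}^t g(\tau,Y(\tau))\,d\tau,
\]
and apply Theorem~\ref{thm:ops-1}. The verifications differ only in technical detail. For the domain, the paper works with all $C^1$ curves valued in $\Gamma$ and proves uniform openness by a finite-subcover compactness argument; your restriction to curves in a fixed nc ball $B_{\rm nc}(X^0,\rho)$ is simpler and avoids that lemma. For the completely bounded inverse of $\delta^Y F(X^0,Y^0)=I-K$, the paper observes that $(I-K)^{(m)}Z=G$ is equivalent to a linear initial value problem, invokes classical existence, and then estimates $\|Z\|_{sm}$ directly after shrinking $\delta$ so that $\kappa\delta<1$ with $\kappa=\max_t\|\delta g(t,Y^0(t))\|_{\mathcal{L}_{\rm cb}(\mat{\mathcal{X}}{s})}$; you sum the Neumann series instead, which avoids that extra shrinking. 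One small slip: the bound $\|K^n\|\le(L\delta)^n/n!$ you quote is the $C^0\to C^0$ Volterra estimate, whereas the norm on $\mathcal{C}$ in the statement is the $C^1$ norm. Since $(K^nZ)'(t)=\delta g(t,Y^0(t))((K^{n-1}Z)(t))$, one gets $\|K^n\|_{\mathcal{L}(\mat{\mathcal{C}}{sm})}\le L^n\delta^{n-1}/(n-1)!$ for $n\ge 1$, which is still summable with a bound independent of $m$, so the conclusion is unaffected.
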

\begin{proof}
By the Cauchy theorem on the existence and uniqueness of the
solution of the initial value problem for an ODE (see, e.g.,
\cite[Theorem 10.4.5]{D}), for $X=X^0$, there exists $\delta>0$
with $[t_0-\delta,t_0+\delta]\subset\mathcal{I}$ and a unique
solution $Y=Y^0$ of the problem \eqref{eq:IVP} with the values in
$\Gamma_s$ and such that $Y^0\in
C^1([t_0-\delta,t_0+\delta],\mat{\mathcal{X}}{s})$.

Let $\Delta$ consist of functions from
$C^1([t_0-\delta,t_0+\delta],\mat{\mathcal{X}}{s})_{\rm nc}$ with
values in $\Gamma$. Clearly, $Y^0\in\Delta$. Since $\Gamma$ is a
nc set, so is $\Delta$. We will show next that $\Delta$ is
uniformly open. Let $Y_*\in\Delta_{sr}$ for some $r\in\mathbb{N}$.
The function $Y_*$ is continuous, therefore the set
$Y_*([t_0-\delta,t_0+\delta])$ is a compact subset of
$\Gamma_{sr}$. For every $t\in [t_0-\delta,t_0+\delta]$, let
$\epsilon_t>0$ be such that $B_{\rm
nc}(Y_*(t),\epsilon_t)\subseteq\Gamma$. In particular, $B_{\rm
nc}(Y_*(t),\epsilon_t)_{sr}\subseteq\Gamma_{sr}$. The balls
$B_{\rm nc}(Y_*(t),\epsilon_t)_{sr}$,
$t\in[t_0-\delta,t_0+\delta]$, cover the compact set
$Y_*([t_0-\delta,t_0+\delta])$. Let
$t_1,\ldots,t_k\in[t_0-\delta,t_0+\delta]$ be such that the balls
$B_{\rm nc}(Y_*(t_i),\epsilon_{t_i})_{sr}$, $i=1,\ldots,k$, form a
finite sub-cover of $Y_*([t_0-\delta,t_0+\delta])$. Then, for
every $t\in[t_0-\delta,t_0+\delta]$, one has
$$\max_{i=1,\ldots,k}(\epsilon_{t_i}-\|Y_*(t)-Y_*(t_i)\|_{sr})>0.$$
Since the left-hand side of this inequality is continuous in $t$,
we have that
$$\mu:=\min_{t\in[t_0-\delta,t_0+\delta]}\max_{i=1,\ldots,k}(\epsilon_{t_i}-\|Y_*(t)-Y_*(t_i)\|_{sr})>0.$$
Let $Y\in B_{\rm nc}(Y_*,\mu)_{srm}$ for some $m\in\mathbb{N}$.
Then we have that, for every $t\in[t_0-\delta,t_0+\delta]$, there
is $i\in\{1,\ldots,k\}$ such that
\begin{multline*}
\|Y(t)-Y_*^{(m)}(t)\|_{srm}\le\|Y-Y_*^{(m)}\|_\infty\le
\|Y-Y_*^{(m)}\|_{srm}<\mu \\
\le \epsilon_{t_i}-\|Y_*(t)-Y_*(t_i)\|_{sr}.
\end{multline*}
Then
\begin{multline*}
\|Y(t)-Y_*^{(m)}(t_i)\|_{srm}\le
\|Y(t)-Y_*^{(m)}(t)\|_{srm}+\|Y_*^{(m)}(t)-Y_*^{(m)}(t_i)\|_{srm}\\
=
\|Y(t)-Y_*^{(m)}(t)\|_{srm}+\|Y_*(t)-Y_*(t_i)\|_{sr}<\epsilon_{t_i},
\end{multline*}
i.e., $Y(t)\in B_{\rm
nc}(Y_*(t_i),\epsilon_{t_i})\subseteq\Gamma$. Thus $Y_*$ has a
neighborhood $B_{\rm nc}(Y_*,\mu)$ contained in $\Delta$, so that
$\Delta$ is a uniformly open nc set.

Let
$$\Omega=\coprod_{m=1}^\infty(\Gamma_{sm}\times\Delta_{sm}).$$
Clearly, $\Omega$ is a uniformly open nc set in
$(\mat{\mathcal{X}}{s}\times
C^1([t_0-\delta,t_0+\delta],\mat{\mathcal{X}}{s}))_{\rm nc}$, and
$$F\colon\Omega\to
C^1([t_0-\delta,t_0+\delta],\mat{\mathcal{X}}{s})_{\rm nc}$$
defined by
$$[F(X,Y)](t):=Y(t)-X-\int_{t_0}^tg(\tau,Y(\tau))\,d\tau$$
is a continuous nc function. We note that \eqref{eq:IVP} is
equivalent to $F(X,Y)=0$. Since $\delta^Yg$ is continuous on
$[t_0-\delta,t_0+\delta]\times\Delta$ in the cb-norm, we can write
$$[\delta^YF(X,Y)(Z)](t)=Z(t)-\int_{t_0}^t\delta^Yg(\tau,Y(\tau))(Z(\tau))\,d\tau.$$
Clearly, $\delta^YF$ is cb-continuous on $\Omega$ and, in fact,
independent of $X$.

Our goal now is to show that
$\delta^YF(X^0,Y^0)\in\mathcal{L}_{\rm cb}(\mat{\mathcal{X}}{s})$
is invertible and its inverse
 is completely bounded, i.e., $(\delta^YF(X^0,Y^0))^{-1}\in\mathcal{L}_{\rm
cb}(\mat{\mathcal{X}}{s})$. Given any $m\in\mathbb{N}$ and $G\in
C^1([t_0-\delta,t_0+\delta],\mat{\mathcal{X}}{sm})$, there exists
a unique solution $Z\in
C^1([t_0-\delta,t_0+\delta],\mat{\mathcal{X}}{sm})$ of the
equation
\begin{equation}\label{eq:intsol}
\delta^YF(X^0,Y^0)^{(m)}(Z)=G,
\end{equation}
since \eqref{eq:intsol} is equivalent to the initial value problem
for the linear ODE
\begin{equation}\label{eq:linear-ODE}
\dot{Z}(t)-\delta^Yg(t,Y^0(t))^{(m)}(Z(t))=\dot{G}(t),\qquad
Z(t_0)=G(t_0),
\end{equation}
and the latter has a unique solution on $[t_0-\delta,t_0+\delta]$;
see, e.g., \cite[Theorem 10.6.3]{D}. Since the bounded operator
$\delta^YF(X^0,Y^0)^{(m)}$ is invertible, its inverse
$$(\delta^YF(X^0,Y^0)^{(m)})^{-1}=\Big((\delta^YF(X^0,Y^0))^{-1}\Big)^{(m)}$$
is bounded as well by the Banach open mapping theorem. We are
going now to estimate its norm. If \eqref{eq:intsol} holds, then
\begin{multline*}
\|Z\|_\infty\le\|G\|_\infty+\max_{t\in[t_0-\delta,t_0+\delta]}
\Big\|\int_{t_0}^t\delta^Yg(\tau,Y(\tau))^{(m)}(Z(\tau))\,d\tau\Big\|_{sm}\\
\le\|G\|_\infty+\delta\max_{t\in[t_0-\delta,t_0+\delta]}
\|\delta^Yg(t,Y(t))\|_{\mathcal{L}_{\rm
cb}(\mat{\mathcal{X}}{s})}\|Z\|_\infty.
\end{multline*}
Making $\delta$ smaller if necessary, so that
$$\kappa:=\max_{t\in[t_0-\delta,t_0+\delta]}\|\delta^Yg(t,Y(t))\|_{\mathcal{L}_{\rm
cb}(\mat{\mathcal{X}}{s})}<\frac{1}{\delta},$$ we obtain that
$$\|Z\|_\infty\le\frac{\|G\|_\infty}{1-\kappa\delta}\le\frac{\|G\|_{sm}}{1-\kappa\delta}.$$
Using this estimate, we obtain from \eqref{eq:linear-ODE} that
$$\|\dot{Z}\|_\infty\le\|\dot{G}\|_\infty+\kappa\|Z\|_\infty\le\Big(1+\frac{\kappa}{1-\kappa\delta}\Big)\|G\|_{sm}
=\frac{1-\kappa\delta+\kappa}{1-\kappa\delta}\|G\|_{sm}.$$
Therefore
$$\|Z\|_{sm}=\max\{\|Z\|_\infty,\|\dot{Z}\|_\infty\}\le
\max\Big\{\frac{1}{1-\kappa\delta},\frac{1-\kappa\delta+\kappa}{1-\kappa\delta}\Big\}\|G\|_{sm}.$$
If, moreover, $\delta\le 1$, then we obtain the inequality
$$\|Z\|_{sm}\le\frac{1-\kappa\delta+\kappa}{1-\kappa\delta}\|G\|_{sm},$$
i.e.,
$$\|(\delta^YF(X^0,Y^0)^{(m)})^{-1}
\|_{\mathcal{L}(\mat{\mathcal{X}}{sm})}\le\frac{1-\kappa\delta+\kappa}{1-\kappa\delta}.$$
Since the right-hand side is independent of $m$, the operator
$(\delta^YF(X^0,Y^0))^{-1}$ is completely bounded and
$$\|(\delta^YF(X^0,Y^0))^{-1}
\|_{\mathcal{L}_{\rm
cb}(\mat{\mathcal{X}}{s})}\le\frac{1-\kappa\delta+\kappa}{1-\kappa\delta}.$$

Now, as we fixed $\delta$ with the additional properties above,
all the assumptions of Theorem \ref{thm:ops-1} on the nc function
$F$ are satisfied, and then its conclusions I and II guarantee the
existence of a uniformly analytic nc function $f$ as in
\eqref{eq:uaf} which assigns to every initial data $X\in B_{\rm
nc}(X^0,\alpha)$ the unique solution $Y$ of \eqref{eq:IVP}.
\end{proof}

\subsection{Extremal problems with nc constraints}

In this section, we discuss extremal problems in nc spaces with
the constraints determined by nc functions. We use the implicit nc
function theorem to reduce the problem to certain equations which
give the necessary conditions for the constrained extremum. In the
case where the underlying nc spaces are over finite-dimensional
vector spaces, we obtain the equations with Lagrange multipliers.
Since these extremal problems are over matrices of infinitely many
sizes, the results are somewhat different from the classical
(commutative) case.

Let $\mathcal{X}$ and $\mathcal{Y}$ be operator spaces over
$\mathbb{R}$ \cite{Ruan}. Let
$\Omega\subseteq(\mathcal{X}\times\mathcal{Y})_{\rm nc}$ be a
uniformly open nc set, and let a function
$g\colon\Omega\to\mathbb{R}$ satisfy 
\begin{equation}\label{eq:dirsum-g}
g(X^{(m)},Y^{(m)})=g(X,Y)\ {\rm for\ every\ } X,Y\in\Omega \ {\rm
and}\ m\in\mathbb{N}.
\end{equation}
Let $\mathcal{W}$ be a real operator space and
$G\colon\Omega\to\mathcal{W}_{\rm nc}$ a nc function. Let
$\tau\colon\mathcal{W}_{\rm nc}\to\mathbb{R}$ satisfy
\begin{equation}\label{eq:dirsum-tau}
\tau(W^{(m)})=\tau(W)\ {\rm for\ every\ } W\in\mathcal{W}_{\rm nc}
\ {\rm and}\ m\in\mathbb{N}.
\end{equation}
Then $g=\tau\circ G\colon\Omega\to\mathbb{R}$ satisfies
\eqref{eq:dirsum-g}. One example of such $\tau$ is given by
$$\tau(W)=\|W\|_n,\qquad W\in\mathcal{W}^{n\times n},\
n\in\mathbb{N}.$$ Another example is a \emph{normalized trace on}
$\mathcal{W}_{\rm nc}$, defined as an arbitrary bounded linear
functional on $\mathcal{W}$ and then extended to square matrices
over $\mathcal{W}$ by
\begin{equation}\label{eq:trace}
\tau(W):=\frac{1}{n}\trace(\tau^{(n)}(W))=\frac{1}{n}\sum_{i=1}^n\tau(W_{ii}),\quad
W=[W_{ii}]\in\mat{\mathcal W}{n},\ n\in\mathbb{N}.
\end{equation}
Indeed, given $W=[W_{ii}]\in\mat{\mathcal W}{s}$, we have
\begin{multline*}
\tau(W^{(m)})=\frac{1}{sm}\trace\Big(\tau^{(sm)}(W^{(m)})\Big)=\frac{1}{sm}\sum_{i=1}^{sm}\tau([W^{(m)}]_{ii})\\
= \frac{1}{sm}\cdot m\sum_{i=1}^s\tau(W_{ii})=\tau(W).
\end{multline*}
Let $\mathcal{Z}$ be a real operator space and
$F\colon\Omega\to\mathcal{Z}_{\rm nc}$ a nc function. We will say
that $(X^0,Y^0)\in\Omega$ is a \emph{point of constrained (local)
maximum for $g$} if $g(X^0,Y^0)\ge g(X,Y)$ for every $(X,Y)$ from
some nc ball $B_{\rm nc}((X^0,Y^0),\epsilon)\subseteq\Omega$
subject to the constraint $F(X,Y)=0$.

Suppose $s\in\mathbb{N}$, $(X^0,Y^0)\in\Omega_s$ is a point of
constrained maximum  for $g$, and $g|_{\Omega_{sm}}$ is
G-differentiable at $(X^{0(m)},Y^{0(m)})$ for every
$m\in\mathbb{N}$. We also assume that $F$ satisfies the
assumptions of Theorem \ref{thm:ops-1} for the point $(X^0,Y^0)$.
Then by Theorem \ref{thm:ops-1} there exist $\alpha,\beta>0$ such
that $B(X^{0(m)},\alpha)\times
B(Y^{0(m)},\beta)\subseteq\Omega_{sm}$ for every $m\in\mathbb{N}$
and a uniformly analytic nc function $f\colon B_{\rm
nc}(X^0,\alpha)\to B_{\rm nc}(Y^0,\beta)$ so that $F(X,Y)=0$ if
and only if $Y=f(X)$. Then $X^0$ is a point of unconstrained local
maximum of the function $\Phi\colon B_{\rm nc}(X^0,\alpha)\to
\mathbb{R}$ defined by $$\Phi(X)=g(X,f(X)).$$ Restricting $\Phi$
to $B_{\rm nc}(X^0,\alpha)_{sm}=B(X^{0(m)},\alpha)$,
$m=1,2,\ldots$, we obtain a sequence of necessary conditions for
the extremum of $\Phi$:
\begin{equation*}
\delta\Phi(X^{0(m)})=0,\qquad m\in\mathbb{N}.
\end{equation*}
Taking into account conclusion III of Theorem \ref{thm:ops-1}, we
can write
\begin{multline*}
0=\delta\Phi(X^{0(m)})=\delta^Xg(X^{0(m)},Y^{0(m)})+\delta^Yg(X^{0(m)},Y^{0(m)})\delta
f(X^{0(m)})\\
=\delta^Xg(X^{0(m)},Y^{0(m)}) -\delta^Yg(X^{0(m)},Y^{0(m)})\\
\hfill \cdot (\delta^YF(X^{0(m)},Y^{0(m)}))^{-1}
\delta^XF(X^{0(m)},Y^{0(m)})\\
=\delta^Xg(X^{0(m)},Y^{0(m)})
-\delta^Yg(X^{0(m)},Y^{0(m)})\Big((\delta^YF(X^{0},Y^{0}))^{-1}\delta^XF(X^{0},Y^{0})\Big)^{(m)}.
\end{multline*}
In the special case of $g=\tau\circ G$, where $G\colon\Omega\to
\mathcal{W}_{\rm nc}$ is a nc function which is G-differentiable
at $(X^0,Y^0)$ and $\tau$ is a normalized trace on
$\mathcal{W}_{\rm nc}$, we have, for every
$H=[H_{ii}]\in\mat{(\mat{\mathcal{X}}{s})}{m}\cong
\mat{\mathcal{X}}{sm}$, that
\begin{multline*}
\delta^Xg(X^{0(m)},Y^{0(m)})(H)=\tau(\delta^XG(X^{0(m)},Y^{0(m)})(H))=\tau(\delta^XG(X^{0},Y^{0})^{(m)}(H))\\
=\frac{1}{m}\sum_{i=1}^m\tau(\delta^XG(X^{0},Y^{0})(H_{ii}))=\frac{1}{m}\sum_{i=1}^m\delta^Xg(X^{0},Y^{0})(H_{ii}),
\end{multline*}
and similarly for
$H=[H_{ii}]\in\mat{(\mat{\mathcal{Y}}{s})}{m}\cong
\mat{\mathcal{Y}}{sm}$,
\begin{multline*}
\delta^Yg(X^{0(m)},Y^{0(m)})(H)=\tau(\delta^YG(X^{0(m)},Y^{0(m)})(H))=\tau(\delta^YG(X^{0},Y^{0})^{(m)}(H))\\
=\frac{1}{m}\sum_{i=1}^m\tau(\delta^YG(X^{0},Y^{0})(H_{ii}))=\frac{1}{m}\sum_{i=1}^m\delta^Yg(X^{0},Y^{0})(H_{ii}).
\end{multline*}
Then it follows that, in this special case, the sequence of the
necessary conditions for the extremum above, with $m=1,2,\ldots,$
is equivalent to the single necessary condition for $m=1$.

We now summarize the discussion above in the following theorem.
\begin{thma}\label{thm:constr-max}
Let $s\in\mathbb{N}$, let $(X^0,Y^0)\in\Omega_s$ be a point of
constrained maximum for a function $g\colon\Omega\to\mathbb{R}$
satisfying \eqref{eq:dirsum-g}. If the nc function
$F\colon\Omega\to\mathcal{Z}_{\rm nc}$ that determines the
constraint satisfies the assumptions of Theorem \ref{thm:ops-1}
for the point $(X^0,Y^0)$ and $g|_{\Omega_{sm}}$ is
G-differentiable at $(X^{0(m)},Y^{0(m)})$ for every
$m\in\mathbb{N}$, then
\begin{multline}\label{eq:seq}
\delta^Xg(X^{0(m)},Y^{0(m)})
=\delta^Yg(X^{0(m)},Y^{0(m)})\Big((\delta^YF(X^{0},Y^{0}))^{-1}\delta^XF(X^{0},Y^{0})\Big)^{(m)},\\
m\in\mathbb{N}.
\end{multline}
If, in addition, $g=\tau\circ G$, where $G\colon\Omega\to
\mathcal{W}_{\rm nc}$ is a nc function which is G-differentiable
at $(X^0,Y^0)$ and $\tau$ is a normalized trace on
$\mathcal{W}_{\rm nc}$, then the sequence of equations
\eqref{eq:seq} is equivalent to the single equation
\begin{equation}\label{eq:single}
\delta^Xg(X^{0},Y^{0})
=\delta^Yg(X^{0},Y^{0})(\delta^YF(X^{0},Y^{0}))^{-1}\delta^XF(X^{0},Y^{0}).
\end{equation}
\end{thma}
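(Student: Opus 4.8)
The plan is to make precise the reduction sketched in the discussion preceding the statement: one converts the constrained problem into an unconstrained one via the implicit nc function $f$. First I would apply Theorem~\ref{thm:ops-1} to the constraint $F$ at $(X^0,Y^0)$, obtaining radii $\alpha,\beta>0$, a uniformly analytic nc function $f\colon B_{\rm nc}(X^0,\alpha)\to B_{\rm nc}(Y^0,\beta)$ with $F(X,Y)=0\iff Y=f(X)$ on $B(X^{0(m)},\alpha)\times B(Y^{0(m)},\beta)$, and (conclusion~III) the formula $\delta f(X)=-\big(\delta^YF(X,f(X))\big)^{-1}\delta^XF(X,f(X))$; in particular $f(X^{0(m)})=Y^{0(m)}$. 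Setting $\Phi(X):=g(X,f(X))$ on $B_{\rm nc}(X^0,\alpha)$, the constrained-maximum hypothesis together with \eqref{eq:dirsum-g} and the continuity of $f$ shows that each amplification $X^{0(m)}$ is an interior local maximum of the real-valued function $\Phi|_{B(X^{0(m)},\alpha)}$ on the open set $B(X^{0(m)},\alpha)\subseteq\mat{\mathcal{X}}{sm}$.

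Next I would differentiate $\Phi$ at $X^{0(m)}$. Since $f$ is uniformly analytic it is Fr\'echet differentiable on the nc balls, so the composition chain rule --- exactly as already used in the proof of conclusion~III of Theorem~\ref{thm:ops-1} --- gives
\[
\delta\Phi(X^{0(m)})=\delta^Xg(X^{0(m)},Y^{0(m)})+\delta^Yg(X^{0(m)},Y^{0(m)})\,\delta f(X^{0(m)}).
\]
Because a G-differentiable real function has a vanishing G-derivative at an interior local maximum (look at $t\mapsto\Phi(X^{0(m)}+tV)$ for each direction $V$), the left-hand side is $0$. Substituting the formula for $\delta f$ and using that $\Delta_R F$ respects direct sums (cf.\ \eqref{eq:intro7}), so that $\delta^YF(X^{0(m)},Y^{0(m)})=(\delta^YF(X^0,Y^0))^{(m)}$, $\delta^XF(X^{0(m)},Y^{0(m)})=(\delta^XF(X^0,Y^0))^{(m)}$, and amplification commutes with inversion and composition, I get $\delta f(X^{0(m)})=-\big((\delta^YF(X^0,Y^0))^{-1}\delta^XF(X^0,Y^0)\big)^{(m)}$, and hence \eqref{eq:seq}.

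For the second assertion, with $g=\tau\circ G$ and $\tau$ a normalized trace, I would use that $G$ is a nc function --- so $\delta^XG$ and $\delta^YG$ respect direct sums --- together with the block-trace identity \eqref{eq:trace} to compute, for $H=[H_{ii}]\in\mat{(\mat{\mathcal{X}}{s})}{m}\cong\mat{\mathcal{X}}{sm}$,
\[
\delta^Xg(X^{0(m)},Y^{0(m)})(H)=\tau\big(\delta^XG(X^0,Y^0)^{(m)}(H)\big)=\frac1m\sum_{i=1}^m\delta^Xg(X^0,Y^0)(H_{ii}),
\]
and likewise $\delta^Yg(X^{0(m)},Y^{0(m)})(K)=\frac1m\sum_{i=1}^m\delta^Yg(X^0,Y^0)(K_{ii})$. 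Applying this with $K=\big((\delta^YF(X^0,Y^0))^{-1}\delta^XF(X^0,Y^0)\big)^{(m)}(H)$, whose $(i,i)$ block is $(\delta^YF(X^0,Y^0))^{-1}\delta^XF(X^0,Y^0)(H_{ii})$, the $m$-th equation of \eqref{eq:seq} becomes the statement that $\frac1m\sum_{i=1}^m\big[\delta^Xg(X^0,Y^0)-\delta^Yg(X^0,Y^0)(\delta^YF(X^0,Y^0))^{-1}\delta^XF(X^0,Y^0)\big](H_{ii})=0$ for all such $H$. Since the diagonal blocks $H_{ii}$ range independently over $\mat{\mathcal{X}}{s}$, this is equivalent to \eqref{eq:single}, i.e.\ to the $m=1$ case; thus the whole sequence \eqref{eq:seq} collapses to \eqref{eq:single}.

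The steps are individually short, and I expect the only points needing a word of care to be (i)~the legitimacy of the chain rule defining $\delta\Phi$, which is covered by the Fr\'echet analyticity of the uniformly analytic $f$ (the same fact already invoked in Theorem~\ref{thm:ops-1}), and (ii)~the bookkeeping of the $(m)$-amplifications --- that $\delta^XF$, $\delta^YF$, their inverse composition, and the normalized trace all interact correctly with direct sums --- which is immediate from the direct-sum-respecting property of $\Delta_R F$ in \eqref{eq:intro7} and from \eqref{eq:trace}.
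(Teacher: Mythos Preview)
Your proposal is correct and follows essentially the same route as the paper: the paper's proof is precisely the discussion preceding the theorem (which is explicitly stated as a summary), reducing the constrained problem to an unconstrained one via the implicit nc function $f$, applying the chain rule and conclusion~III of Theorem~\ref{thm:ops-1} at each amplification level, and then using the block-diagonal behavior of $\delta G$ and the normalized trace for the second part. Your treatment of the equivalence in the second assertion is in fact slightly more explicit than the paper's, which simply asserts the equivalence after computing $\delta^Xg(X^{0(m)},Y^{0(m)})(H)$ and $\delta^Yg(X^{0(m)},Y^{0(m)})(H)$ as averages over diagonal blocks.
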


We note that, in the special case of $g=\tau\circ G$ that we
singled out in Theorem \ref{thm:constr-max}, one can restrict the
search for a point of constrained maximum to the \emph{critical
points}, i.e., those points $(X^0,Y^0)$ which solve
\eqref{eq:single} together with $F(X^0,Y^0)=0$.

We now stick to the special case of $g=\tau\circ G$ above and
assume that $\mathcal{X}=\mathbb{R}^a$,
$\mathcal{Y}=\mathcal{Z}=\mathbb{R}^b$, and $\mathcal{W}$ is an
arbitrary real operator space. We will show that the critical
points in this case can be found by the method of Lagrange
multipliers.

We first consider the case where a point of constrained maximum is
scalar, i.e., $s=1$ and
$$(x^0,y^0)=(x_1^0,\ldots,x_a^0,y_1^0,\ldots,y_b^0)\in\Omega_1\subseteq\mathbb{R}^{a+b}.$$
Then $\delta^YF(x^0,y^0)$ can be identified with the invertible
matrix $[\frac{\partial F_i}{\partial
y_j}(x^0,y^0)]\in\mat{\mathbb{R}}{b}$, and \eqref{eq:single} is
equivalent to the equations
\begin{equation}\label{eq:eqs}
\frac{\partial g}{\partial
x_i}(x^0,y^0)=\sum_{j=1}^b\sum_{k=1}^b\frac{\partial g}{\partial
y_j}(x^0,y^0)\Big((\delta^YF(x^0,y^0))^{-1}\Big)_{jk}\frac{\partial
F_k}{\partial x_i}(x^0,y^0),\quad i=1,\ldots,a.
\end{equation}
Setting $$\lambda_k=-\sum_{j=1}^b\frac{\partial g}{\partial
y_j}(x^0,y^0)\Big((\delta^YF(x^0,y^0))^{-1}\Big)_{jk},\quad
k=1,\ldots,b,$$ we can rewrite \eqref{eq:eqs} as
$$\frac{\partial g}{\partial
x_i}(x^0,y^0)+\sum_{k=1}^b\lambda_k\frac{\partial F_k}{\partial
x_i}(x^0,y^0)=0,\quad i=1,\ldots,a.
$$
On the other hand, for $i=1,\ldots,b$, we have
\begin{multline*}
\frac{\partial g}{\partial
y_i}(x^0,y^0)+\sum_{k=1}^b\lambda_k\frac{\partial F_k}{\partial
y_i}(x^0,y^0)\\
=\frac{\partial g}{\partial
y_i}(x^0,y^0)-\sum_{j=1}^b\sum_{k=1}^b\frac{\partial g}{\partial
y_j}(x^0,y^0)\Big((\delta^YF(x^0,y^0))^{-1}\Big)_{jk}\frac{\partial
F_k}{\partial y_i}(x^0,y^0)\\
=\frac{\partial g}{\partial
y_i}(x^0,y^0)-\sum_{j=1}^b\frac{\partial g}{\partial
y_j}(x^0,y^0)\delta_{ij}=0.
\end{multline*}
Thus we obtain the full set of $a+2b$ equations for a constrained
maximum, with $a+2b$ unknowns $x^0_1$, \ldots, $x^0_a$, $y^0_1$,
\ldots, $y^0_b$, $\lambda_1$, \ldots, $\lambda_k$:
\begin{eqnarray*}
\frac{\partial g}{\partial
x_i}(x^0,y^0)+\sum_{k=1}^b\lambda_k\frac{\partial F_k}{\partial
x_i}(x^0,y^0)=0, & i=1,\ldots,a,\\
\frac{\partial g}{\partial
y_j}(x^0,y^0)+\sum_{k=1}^b\lambda_k\frac{\partial F_k}{\partial
y_j}(x^0,y^0)=0, & j=1,\ldots,b,\\
F_j(x^0,y^0)=0, & j=1,\ldots,b.
\end{eqnarray*}

In the case of a constrained maximum at $(X^0,Y^0)\in\Omega_s$
with $s>1$, a similar calculation gives the equations with
matrices $\Lambda_k\in\mat{\mathbb{R}}{s}$, $k=1,\ldots, b$, and
$s\times s$ matrix values of $F_k(X^0,Y^0)$ (i.e., $(a+2b)s^2$
equations with $(a+2b)s^2$ unknowns):
\begin{eqnarray*}
\frac{\partial g}{\partial
(X_i)_{\alpha\beta}}(X^0,Y^0)+\sum_{k=1}^b\trace\Big(\Lambda_k\frac{\partial
F_k}{\partial
(X_i)_{\alpha\beta}}(X^0,Y^0)\Big)=0, \\
i=1,\ldots,a,\ \alpha,\beta=1,\ldots,s,\\
\frac{\partial g}{\partial
(Y_j)_{\alpha\beta}}(X^0,Y^0)+\sum_{k=1}^b\trace\Big(\Lambda_k\frac{\partial
F_k}{\partial (Y_j)_{\alpha\beta}}(X^0,Y^0)\Big)=0,
 \\
 j=1,\ldots,b,\ \alpha,\beta=1,\ldots,s,\\
(F_j(X^0,Y^0))_{\alpha\beta}=0, \quad j=1,\ldots,b,\
\alpha,\beta=1,\ldots,s.
\end{eqnarray*}

In general, the search for a point of constrained maximum for $g$
requires solving these equations for all matrix sizes
$s\in\mathbb{N}$. Thus, for a problem at hand, an additional
insight would be valuable in order to restrict the search to
certain matrix sizes.

\end{document}